\newcommand{\R}{\mathbb{R}}
\newcommand{\N}{\mathbb{N}}
\DeclareMathOperator*{\dist}{dist} 
\DeclareMathOperator*{\pipdist}{\text{dist}_\pip}
\def\vint_#1{\mathchoice%
          {\mathop{\kern 0.2em\vrule width 0.6em height 0.69678ex depth -0.58065ex
                  \kern -0.8em \intop}\nolimits_{\kern -0.4em#1}}%
          {\mathop{\kern 0.1em\vrule width 0.5em height 0.69678ex depth -0.60387ex
                  \kern -0.6em \intop}\nolimits_{#1}}%
          {\mathop{\kern 0.1em\vrule width 0.5em height 0.69678ex depth -0.60387ex
                  \kern -0.6em \intop}\nolimits_{#1}}%
          {\mathop{\kern 0.1em\vrule width 0.5em height 0.69678ex depth -0.60387ex
                  \kern -0.6em \intop}\nolimits_{#1}}}
\newcommand{\Om}{\Omega}
\newcommand{\pip}{\varphi}
\newcommand{\ch}{\text{\raise 1.3pt \hbox{$\chi$}\kern-0.2pt}}
\newcommand{\dOmp}{d_{\Om_\pip}}
\theoremstyle{plain}
\newtheorem{theorem}[equation]{Theorem}
\newtheorem{lemma}[equation]{Lemma}
\numberwithin{equation}{section}
\theoremstyle{definition}
\newtheorem{definition}[equation]{Definition}
\theoremstyle{remark}
\newtheorem{remark}[equation]{Remark}
\begin{document}

\title[Transformation of uniform domains via distance weights]{Conformal transformation of uniform domains 
under weights that depend on distance to the boundary}

\author[Gibara, Shanmugalingam]{Ryan Gibara, Nageswari Shanmugalingam}
\thanks{N.S. is partially supported by
the NSF (U.S.A.) grant DMS~\#2054960. Part of the work on this paper was done
while N.S. was visiting MSRI in Spring 2022 to participate in a program supported
by the NSF (U.S.A.) grant DMS~\#1928930. She wishes to thank MSRI for its kind
hospitality. We also thank the kind referee for valuable comments pointing out 
inaccuracies in early manuscripts of the paper. Authors state no conflict of interest.
\\
\\
 {\small MSC (2020): Primary: 30L05; Secondary: 30L10.}}

\date{\today}

\keywords{Uniform domains, conformal change in metric, distance to the boundary.}


\begin{abstract}
The sphericalization procedure converts a Euclidean space into a compact sphere.
In this note we propose a variant of this procedure for locally compact, rectifiably path-connected,
non-complete, unbounded
metric spaces by using conformal deformations that depend only on the distance to the boundary of the metric space.
This deformation is locally bi-Lipschitz to the original domain 
near its boundary, but transforms the space
into a bounded domain. We will show that if the original metric space is a uniform domain with respect to its completion,
then the transformed space is also a uniform domain.
\end{abstract}


\maketitle

\section{Introduction}

The stereographic projection identifies the one-point compactified complex plane with the unit sphere, and this
identity has been exploited in the study of analytic functions and conformal maps between planar regions and their
behavior at infinity. Higher
dimensional stereographic projections also identify the one-point compactification of $\R^n$ with the 
$n$-dimensional unit sphere in $\R^{n+1}$. The work of~\cite{BK, BHX} formulated
a fruitful generalization of stereographic projection to more general metric spaces, and this formulation has been
used in the literature to study Gromov hyperbolic spaces, uniform domains, and quasi-M\"obius maps, see for 
example~\cite{BBL, BK, BHX, DL1, DL2, HSX, L, LS, ZLL}. 
A metric space \emph{inversion}
about a point $p$ in a metric space $X$ turns a bounded metric space $X\setminus\{p\}$ into an unbounded
metric space, and the \emph{sphericalization} of $X$ with respect to the base point $p\in X$ turns
an unbounded metric space $X$ into a bounded space whose completion is topologically the one-point compactification of $X$.
It was also shown in~\cite{BHX} that the sphericalization and inversion operations of uniform domains yield 
uniform domains. Moreover, they show that if the metric space satisfies a geometric condition called 
annular quasiconvexity, then there is control over how the uniformity constant is transformed by these
operations.

Apart from the study of quasiconformal geometry (see~\cite{BHK,MartSarv} for a tiny sampling of offerings from this line
of enquiry), 
uniform domains also play a key role in potential theory as
uniform domains in a complete doubling metric measure space supporting a $p$-Poincar\'e inequality are also
known to support a $p$-Poincar\'e inequality~\cite{BS} and admit a description of traces of Sobolev-class
functions on the domain as belonging to certain Besov classes~\cite{Maly}. Hence, much of the
potential theory and Dirichlet problems on smooth domains are extendable to uniform domains. 
An additional sampling of the vast literature on potential theory related to uniform domains can be found in~\cite{Azz, Lierl}.
For these reasons,
a systematic study of metric transformations that preserve the uniform domain property is desirable, and the
goal of the present note is to contribute to this study.

While sphericalization and inversion are analogues of stereographic projection and its inverse, these operators
distort the metric on $X$ everywhere, including near the boundary of $X$ if $X$ is not complete. In certain
circumstances we would wish not to distort the metric, at least locally, near the boundary of $X$; for example,
if $X$ is a  uniform domain (and hence is locally compact but non-complete), we would wish to preserve the
local nature of the metric on $X$ near $\partial X:=\overline{X}\setminus X$, where 
$\overline{X}$ denotes the completion of $X$, while transforming $X$ into a bounded
space. Such transformation is desirable if we are interested in studying boundary-value problems on $X$ in 
terms of boundary value problems on bounded domains (see for example~\cite{CKKSS}) and so find information
about growth-at-infinity behavior of solutions in the unbounded domain. The purpose of this
note is to propose a range of modifications of the sphericalization procedure of~\cite{BHX} so that the modification
does not perturb the inner length metric, locally, near $\partial X$. Here, by the inner length metric, we mean the metric
$d_{\rm inn}$ given by $d_{\rm inn}(x,y)=\inf_\gamma\ell_d(\gamma)$, where the infimum is over all curves in $\Om$
with end points $x$ and $y$.

To this end, we consider $(\Om, d)$ to be a locally compact, non-complete metric space such that $\Om$ is a 
uniform domain in $\overline{\Om}$ (that is, $\Om$ is a uniform
space in the language of~\cite{BHK}). We also fix a monotone decreasing continuous function $\pip:(0,\infty)\to(0,\infty)$
such that $\pip(t)=1$ when $0<t\le 1$, 
\[
\int_0^\infty \pip(t)\, dt<\infty,
\]
and there is a constant $C_\pip\ge 1$ such that we have $\pip(t)\le C_\pip\,\pip(2t)$ for all $t>0$. As $\Om$ is a uniform domain,
it is rectifiably connected, that is, pairs of points in $\Om$ can be connected by curves in $\Om$ of finite length. Hence,
we use $\pip$ to construct a new metric $d_\pip$ on $\Om$ by setting 
\[
d_\pip(x,y):=\inf_\gamma\ \int_\gamma\pip\circ d_\Om\, ds,
\]
with the infimum ranging over all rectifiable curves in $\Om$ with end points $x,y\in\Om$. 
Here, $\int_\gamma h\, ds:=\int_\gamma h(\gamma(\cdot))\,ds$ is the path integral with respect 
to the arc-length parametrization of the
rectifiable curve $\gamma$, see for example~\cite[Chapter 5]{HKST}.
In the above, $d_\Om$ is defined by $d_\Om(x)=\dist(x,\partial\Om)$, see Definition~\ref{def:dom} below.

We will show in Section~2 that $d_\pip$ and $d$ are locally bi-Lipschitz
near $\partial\Om$ and that the completion of $\overline{\Om}$ with respect to $d_\pip$ is topologically a one-point
compactification of $\overline{\Om}$. We denote $\Om_\pip:=\overline{\Om\cup\partial\Om}^\pip\setminus\partial\Om$,
where $\overline{A}^\pip$ is the completion of $A\subset\overline{\Om}$ with respect to the metric $d_\pip$. 
The following is the main theorem of this note.

\begin{theorem}\label{thm:main}
The domain $\Om_\pip$, equipped with the metric $d_\pip$, is a uniform domain with
$\partial\Om_\pip=\partial\Om$ and uniformity constant depending only on
the constant $C_\pip$ and the uniformity constant $C_U$ associated with the metric $d$. 
Furthermore, the natural identity map $I:\Om\to\Om_\pip\setminus\{\infty\}$ is a
local bi-Lipschitz map, and it is also uniformly locally bi-Lipschitz near $\partial\Om$. If $\Om$ is a length space, then
$I$ is a local isometry near $\partial\Om$.
\end{theorem}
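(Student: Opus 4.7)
The plan is to handle the theorem in three stages: first the topology of the $d_\pip$-completion (giving $\partial\Om_\pip=\partial\Om$ together with one added ideal point $\infty\in\Om_\pip$) together with the local bi-Lipschitz/isometry assertions near $\partial\Om$; then the quasiconvexity part of uniformity; and finally the corkscrew part of uniformity. For the topological claim, I would argue that near any $\xi\in\partial\Om$ the weight $\pip\circ d_\Om$ equals $1$ (since $\pip\equiv 1$ on $(0,1]$), so $d_\pip$ agrees locally with the inner length metric $d_{\mathrm{inn}}$ of $(\Om,d)$, which is in turn locally bi-Lipschitz to $d$ with constant depending on $C_U$; hence $\xi$ remains a boundary point of $\Om$ in $d_\pip$. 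On the other hand, the hypothesis $\int_0^\infty\pip\,dt<\infty$ forces any sequence $\{x_n\}\subset\Om$ with $d_\Om(x_n)\to\infty$ to be $d_\pip$-Cauchy, and two such sequences converge to the same limit because the $d_\pip$-diameter of $\{d_\Om\ge M\}$ tends to $0$ as $M\to\infty$ (using rectifiable connectedness far from $\partial\Om$, a consequence of uniformity). This produces exactly one added ideal point $\infty$, lying in $\Om_\pip$ by definition, and confirms $\partial\Om_\pip=\partial\Om$. The local bi-Lipschitz statement then follows at once: in $\{d_\Om<1\}$ the weight is identically $1$, so $\ell_\pip=\ell$ along curves in this region, and the identity map is a local isometry from $d_{\mathrm{inn}}$ to $d_\pip$; combined with the uniform comparability of $d_{\mathrm{inn}}$ to $d$, this gives the asserted uniformly local bi-Lipschitz property, which becomes a local isometry when $\Om$ is a length space.

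For uniformity of $(\Om_\pip,d_\pip)$, given $x,y\in\Om_\pip$ I would take a uniform curve $\gamma$ for $(\Om,d)$ with constant $C_U$ joining them (handling $x=\infty$ or $y=\infty$ by exhausting $\infty$ with points $x_n$ satisfying $d_\Om(x_n)\to\infty$ and passing to a limit of uniform curves). The key building block is the estimate that for any $z\in\Om$ the distance $d_\pip(z,\partial\Om_\pip)$ is comparable to $\min\bigl(\int_0^{d_\Om(z)}\pip(s)\,ds,\ \int_{d_\Om(z)}^\infty\pip(s)\,ds\bigr)$, the first integral controlling the distance to $\partial\Om$ and the second the distance to $\infty$; both bounds follow from a coarea-type argument (using that $d_\Om$ is $1$-Lipschitz) combined with the uniformity of $\Om$ to produce nearly-monotone curves in $d_\Om$. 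Quasiconvexity $\ell_\pip(\gamma)\lesssim d_\pip(x,y)$ then follows by comparing $\ell_\pip(\gamma)$ with the weighted length of an optimal competitor for $d_\pip(x,y)$ via the same coarea principle, the doubling of $\pip$ absorbing parametrization constants. For the corkscrew condition $d_\pip(z,\partial\Om_\pip)\gtrsim\min\bigl(\ell_\pip(\gamma_{xz}),\ell_\pip(\gamma_{zy})\bigr)$, I would split on the value of $d_\Om(z)$: when $d_\Om(z)\le 1$, the original corkscrew $d_\Om(z)\ge C_U^{-1}\min(\ell(\gamma_{xz}),\ell(\gamma_{zy}))$ together with $\ell_\pip\le\ell$ and $d_\pip(z,\partial\Om_\pip)\asymp d_\Om(z)$ suffices; when $d_\Om(z)>1$, each half of $\gamma$ has weighted length comparable, via the original corkscrew (to bound $d_\Om$ from below along the half) and the doubling of $\pip$, to one of the two integrals defining $d_\pip(z,\partial\Om_\pip)$.

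The main obstacle is the corkscrew estimate in the regime $d_\Om(z)\gg 1$, where both $z$ and nearby parts of $\gamma$ are deep in the interior and potentially close to the new ideal point $\infty$. Here the two competing distances $\int_0^{d_\Om(z)}\pip$ and $\int_{d_\Om(z)}^\infty\pip$ must be matched separately against the weighted lengths of the two halves of $\gamma$, and a half can traverse a wide range of values of $d_\Om$, so the bookkeeping is delicate. The doubling condition $\pip(t)\le C_\pip\pip(2t)$, together with its integrated consequences (comparing $\int_0^t\pip$ with $\int_0^{2t}\pip$ and $\int_s^\infty\pip$ with $\int_{s/2}^\infty\pip$), is precisely what balances the two sides with constants depending only on $C_\pip$ and $C_U$. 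A secondary technical issue is the precise definition of a uniform curve through the added point $\infty$, to be handled by a concatenation and limiting argument.
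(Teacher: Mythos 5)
Your overall architecture tracks the paper's quite closely (weight $\equiv 1$ near $\partial\Om$ giving the local isometry/bi-Lipschitz claims; finiteness of $\int_0^\infty\pip$ producing a single ideal point $\infty$ with $d_\pip$-diameter of $\{d_\Om\ge M\}$ tending to $0$; distance estimates in terms of $\int_0^{d_\Om(z)}\pip$ and $\int_{d_\Om(z)}^\infty\pip$; a case analysis on $d_\Om$ for uniformity). But there is a genuine error in your key building block, and it is exactly the point you flag as ``the main obstacle.'' You assert that $d_\pip(z,\partial\Om_\pip)\asymp\min\bigl(\int_0^{d_\Om(z)}\pip,\ \int_{d_\Om(z)}^\infty\pip\bigr)$, ``the second integral controlling the distance to $\infty$.'' This treats $\infty$ as a boundary point of $\Om_\pip$. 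It is not: by definition $\Om_\pip=\overline{\Om\cup\partial\Om}^\pip\setminus\partial\Om$, so $\infty$ is an \emph{interior} point of $\Om_\pip$ and $\partial\Om_\pip=\partial\Om$ — this is the content of the theorem's first clause and the whole point of the construction (see the half-plane example in the introduction, where $d_\pip(\zeta,\infty)=\beta/(\beta-1)>0$ for every $\zeta\in\partial\Om$). The correct estimate (the paper's Lemma~\ref{lem:dist-pip-bdy}) is $\dOmp(z)\asymp\int_0^{d_\Om(z)}\pip$, with no minimum.

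This matters because the corkscrew inequality you then set out to prove, $\min\{\ell_\pip(\gamma_{xz}),\ell_\pip(\gamma_{zy})\}\lesssim\min\bigl(\int_0^{d_\Om(z)}\pip,\int_{d_\Om(z)}^\infty\pip\bigr)$, is \emph{false}, so no amount of ``delicate bookkeeping'' with the doubling of $\pip$ will close it. Take $x,y\in\Om_0$ with $d(x,y)=R$ large and let $z$ be the deepest point of a $C_U$-uniform curve joining them, so $d_\Om(z)\asymp R$; then both halves of the curve have $\pip$-length comparable to $\int_0^{R}\pip$, which stays bounded away from $0$, while $\int_{R}^\infty\pip\to0$. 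The resolution is not a sharper matching argument but the observation that once $d_\Om(z)\ge1$ the quantity $\dOmp(z)\asymp\int_0^{d_\Om(z)}\pip$ is bounded \emph{below} by a universal constant, which is what makes the corkscrew condition go through for points deep in $\Om$. A secondary (non-fatal, but worth noting) divergence: the paper does not always use the original $d$-uniform curve; when that curve strays above a fixed level $m_0+n_0$, or when an endpoint is $\infty$, the deep segment is replaced by a near-$d_\pip$-geodesic (Lemmas~\ref{medium},~\ref{lem:cross-border},~\ref{lem:to-infinity-and-beyond}), and the resulting concatenation is what is verified to be uniform. Your plan should be amended to incorporate both the corrected distance-to-boundary formula and this replacement step.
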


One additional advantage of the above is that we do not need annular quasiconvexity of $\Om$ with respect to the metric $d$
in order to gain quantitative control over the uniformity constant with respect to the metric $d_\pip$. 

We end the discussion in this section by describing a simple illustrative example. Recall that the upper half-plane
$\Om:=\R\times(0,\infty)$ is a uniform domain and that the sphericalization procedure on $\Om$ gives an isometric
copy of a spherical cap in $\mathbb{S}^2$. Fixing $\beta>1$,
with the choice of $\pip(t)=t^{-\beta}$ for $t>1$ and $\pip(t)=1$ for $0<t\le 1$, we have that \emph{as a set},
$\Om_\pip=(\R\times(0,\infty))\cup\{\infty\}$. Note that the Euclidean boundary $\partial\Om=\R\times\{0\}$.
For \emph{each} $z\in\R\times\{0\}$, a calculation shows that $d_\pip(z,\infty)=\tfrac{\beta}{\beta-1}$, and so
$\infty$ is \emph{not} an accumulation point (with respect to the metric $d_\pip$) of $\R\times\{0\}$;
this is in contrast to the sphericalization of $\Om$, which has an accumulation point of $\R\times\{0\}$. Note also
that for each $z\in\R\times\{0\}$ we have that $d_\pip(z,z+1)=1$, and hence, in this
example,  $\overline{\Om_\pip}^\pip$ is not compact.

The above example illustrates the fact that given $\Om$ and $\pip$ as in this note, there is a positive distance
(with respect to the metric $d_\pip$) between $\infty$ and $\partial\Om_\pip=\partial\Om$; see 
Lemma~\ref{lem:d-bdry-to-phi} and its proof below.

\medskip

\section{Preliminaries}

Consider an unbounded metric space $(\Omega, d)$ with completion $\overline\Omega$ and boundary 
$\partial\Omega:=\overline\Omega\setminus\Omega$. We say that $\Om$ is a \emph{uniform domain} if it 
is locally compact and non-complete, and there exists a constant $C_U\ge 1$ for which each pair 
$x,y\in\Omega$ with $x\neq y$ can be connected by a $C_U$\emph{-uniform curve} $\gamma$. That is, $\gamma$ satisfies the following:
\begin{itemize}
	\item its length (with respect to the metric $d$) satisfies $\ell_d(\gamma)\le C_U\, d(x,y)$;
	\item for each $z$ in the trajectory of $\gamma$, we have
	\[
	\min\{\ell_d(\gamma_{x,z}),\ell_d(\gamma_{z,y})\}\le C_U\, \dist(z,\partial\Om).
	\]
\end{itemize}

By increasing the value of $C_U$ if need be, we can assume that subcurves of uniform curves are also uniform, see~\cite{BHK}.
Moreover, for each point $\zeta\in\partial\Om$ and $x\in\Om$ we can find a $C_U$-uniform curve $\gamma:[0,L)\to\Om$
such that $\gamma(0)=x$ and $\lim_{t\to L^-}\gamma(t)=\zeta$. 

More generally, given $x\in\Om$ and $\zeta\in\partial\Om$, we say that a curve
$\beta:[0,L)\to\Om$ has end points $x$ and $\zeta$ with respect to the metric $d$ if $\beta(0)=x$ and
$\lim_{t\to L^-}\beta(t)=\zeta$, the limit being taken with respect to $d$.

\begin{definition}\label{def:dom}
For $x\in\Omega$, we set $d_\Omega(x):=\dist(x,\partial\Omega)$. 
We let
\[
\Om_0:=\{x\in\Om\, :\, d_\Om(x)\le 1\},
\]
and, for positive integers $n$, we set
\[
\Om_n:=\{x\in\Om\, :\, 2^{n-1}<d_\Om(x)\le2^n\}.
\]
Note that $\Om=\bigcup_{n=0}^\infty\Om_n$.
\end{definition}

We fix a monotone decreasing continuous function $\pip:(0,\infty)\to(0,\infty)$ such that
$\pip(t)=1$ when $0<t\le 1$, there is a constant $C_\pip\ge 1$ such that we have $\pip(t)\le C_\pip\, \pip(2t)$ for all $t>0$, and 
\begin{equation*}
\int_0^\infty\pip(t)\, dt<\infty. 
\end{equation*}
This condition ensures, by quasiconvexity of $\Omega$, that the metric space $(\Om_\pip,d_\pip)$ is bounded, see 
Lemma~\ref{lem:dist-to-infty}.
The above condition is equivalent to the condition we will use frequently in this note:
\begin{equation}\label{eq:pip-reverse-doubling}
\sum_{n=0}^\infty 2^n\, \pip(2^n)<\infty.
\end{equation}
The prototype function $\pip$ to keep in mind is $\pip(t)=t^{-\beta}$ for some $\beta>1$ when $t>1$,
or $\pip(t)=t^{-\beta}(1+\log(t))^{-\kappa}$ for some $\beta>1$ and $\kappa>0$ when $t>1$. The first prototype function
is used in~\cite[Section~7]{CKKSS} to convert an unbounded uniform domain $Z\times[0,\infty)$, with $Z$ a compact length space,
into a bounded uniform domain.

{ \bigskip
\noindent {\bf Standing assumptions:} \emph{In summary, we will assume that $(\Omega, d)$ is an
unbounded uniform domain and that subcurves of all uniform curves are also uniform. Moreover, $\pip:(0,\infty)\to(0,\infty)$
is a monotone decreasing function such that $\pip(t)=1$ for $t\le 1$, $\int_0^\infty\pip(t)\, dt$ is finite, and there is a constant
$C_\pip\ge 1$ such that for all $t>0$ we have $\pip(t)\le C_\pip\, \pip(2t)$.}}

We use $\pip$ to, in the language of~\cite{BHK}, dampen the metric $d$ on $\Om$ by modifying it to 
$d_\pip$. We define this new metric by setting
\[
d_\pip(x,y):=\inf_\gamma \int_\gamma \pip\circ d_\Om\, ds=:\inf_\gamma\int_\gamma \pip(d_\Om(\gamma(t)))\, dt,
\]
with the infimum ranging over all rectifiable curves $\gamma$ in $\Omega$ with end points $x$ and $y$; we 
consider the arc-length (with respect to the original metric $d$) parametrization of $\gamma$. As $\pip(t)\le 1$ for all $t>0$,
we have that $d_\pip(x,y)\le  C_U d(x,y)$ whenever $x,y\in\Om$.  

Now we have two identities for the set $\Om$; namely, $(\Om,d)$ and $(\Om,d_\pip)$. Since $\pip=1$ on $\Om_0$,
both metrics $d$ and $d_\pip$ extend as metrics to $\Om\cup\partial\Om$. We will show this below in Lemma~\ref{lem:d-bdry-to-phi}. 

We denote $\Om_\pip:=\overline{\Om\cup\partial\Om}^\pip\setminus\partial\Om$,
where $\overline{A}^\pip$ is the completion of $A\subset\overline{\Om}$ with respect to the metric $d_\pip$.
First, we show in the following lemma that there is only one point in the completion of $\overline\Om$ with respect to $d_\pip$
that is not in the completion of $\Om$ with respect to $d$. Denoting this point by $\infty$, it follows 
that $\Omega_\pip=\Omega\cup\{\infty\}$.

\begin{lemma}\label{lem:noncomplete}
There is a sequence in $\Om$ that is Cauchy with respect to the metric $d_\pip$ but not with respect to $d$. Any two 
$d_\pip$-Cauchy sequences that are not $d$-Cauchy sequences must be equivalent with respect to the metric $d_\pip$.
\end{lemma}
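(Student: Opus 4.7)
The plan has two parts matching the two assertions.

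\emph{First assertion.} Since $\Om$ is unbounded and uniform, $d_\Om$ is unbounded on $\Om$: for $y\in\Om$ with $d(x_0,y)$ large, the midpoint of a $C_U$-uniform curve joining $x_0$ and $y$ has $d_\Om\ge d(x_0,y)/(2C_U)$ by the second uniformity inequality. So I pick $x_n\in\Om$ with $d_\Om(x_n)\ge 2^n$; since $d_\Om$ is $1$-Lipschitz, $d(x_n,x_0)\to\infty$ and the sequence is not $d$-Cauchy. To see it is $d_\pip$-Cauchy, I connect $x_n$ to $x_m$ (say $n\le m$) by a $C_U$-uniform curve $\gamma$ of length $L$. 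For $z=\gamma(s)$ with $t=\min(s,L-s)$, the uniformity inequality gives $d_\Om(z)\ge t/C_U$, while $1$-Lipschitzness of $d_\Om$ from both endpoints gives $d_\Om(z)\ge 2^n-t$; combining these yields the uniform lower bound $d_\Om(\gamma(s))\ge 2^n/(C_U+1)$. Splitting $\int_\gamma \pip\circ d_\Om\,ds$ at $s_0:=2^nC_U/(C_U+1)$ then gives
\[
d_\pip(x_n,x_m)\le 2s_0\,\pip\!\big(\tfrac{2^n}{C_U+1}\big)+2C_U\int_{2^n/(C_U+1)}^{\infty}\pip(u)\,du,
\]
and both terms vanish as $n\to\infty$: the first because $t\,\pip(t)\to 0$ for decreasing integrable $\pip$ (since $t\pip(t)/2\le\int_{t/2}^{t}\pip$), the second by integrability.

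\emph{Second assertion.} The key auxiliary estimate is a lower bound: if $d_\Om(x)\le R$, then for every $y\in\Om$,
\[
d_\pip(x,y)\ \ge\ \pip(3R)\,\min\!\big(d(x,y),\,2R\big).
\]
Indeed, the initial sub-arc of any curve from $x$, of arclength $\min(\ell(\gamma),2R)$, lies in $\{d_\Om\le 3R\}$ by $1$-Lipschitzness, where $\pip\ge\pip(3R)$. Using this, I show that any $d_\pip$-Cauchy non-$d$-Cauchy sequence $\{x_n\}$ must satisfy $d_\Om(x_n)\to\infty$: if a subsequence has $d_\Om(x_{n_k})\le R$, the lower bound forces $\{x_{n_k}\}$ to be $d$-Cauchy, with some $d$-limit $x^*\in\overline\Om$. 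A case split then rules out any other subsequence $\{x_{m_j}\}$ with $d(x_{m_j},x^*)$ bounded away from $0$: either $d_\Om(x_{m_j})$ stays bounded (and the same lower bound applied between $x_{n_k}$ and $x_{m_j}$ contradicts the Cauchy property) or $d_\Om(x_{m_j})\to\infty$ (and then $d(x_{n_k},x_{m_j})\ge d_\Om(x_{m_j})-R\to\infty$, so the lower bound yields $d_\pip(x_{n_k},x_{m_j})\ge 2R\,\pip(3R)>0$, a contradiction).

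Hence for two $d_\pip$-Cauchy non-$d$-Cauchy sequences $\{x_n\},\{y_n\}$, both have $d_\Om\to\infty$. Joining $x_n$ to $y_n$ by a $C_U$-uniform curve and reapplying the upper bound from the first part with $R_n:=\min(d_\Om(x_n),d_\Om(y_n))\to\infty$ yields $d_\pip(x_n,y_n)\to 0$, proving equivalence. The main obstacle is the dichotomy step showing $d_\Om(x_n)\to\infty$: the lower bound is local (the constant depends on $R$), so extending from a well-behaved subsequence to the full sequence requires a careful case analysis on the behavior of any competing subsequence.
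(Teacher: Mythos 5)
Your proof is correct, and while it follows the same broad strategy as the paper (use uniform curves to get a pointwise lower bound on $d_\Om$ along connecting curves, then integrate $\pip$ against it), the execution differs in ways worth noting. For the existence of a $d_\pip$-Cauchy, non-$d$-Cauchy sequence, the paper first builds a single infinite uniform curve $\beta_\infty$ via Arzel\`a--Ascoli and places the points $y_j$ on it, then estimates consecutive gaps shell by shell using the dyadic decomposition $\Om=\bigcup_n\Om_n$, the reverse doubling constant $C_\pip$, and the reformulated condition $\sum_n 2^n\pip(2^n)<\infty$; you instead pick any points with $d_\Om(x_n)\ge 2^n$, join pairs directly, and exploit the two-sided bound $d_\Om(\gamma(s))\ge\max\{t/C_U,\,2^n-t\}$ to reduce everything to $\int_0^\infty\pip<\infty$ and the elementary fact $t\pip(t)\to 0$ — no limit curve, no shells, and no use of the doubling hypothesis on $\pip$ in this lemma. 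For the second assertion, your clean quantitative bound $d_\pip(x,y)\ge\pip(3R)\min\{d(x,y),2R\}$ when $d_\Om(x)\le R$ plays the role of the paper's two-case estimate on near-geodesics, and your subsequence dichotomy actually spells out more carefully than the paper does why $d_\Om(x_n)\to\infty$ for the \emph{whole} sequence (the paper only treats the case where the entire sequence stays in $\bigcup_{n\le k_0}\Om_n$). The final equivalence step is the same idea in both: join $x_n$ to $y_n$ by a uniform curve and show its $\pip$-length is a vanishing tail, discretely in the paper and continuously in your version. In short, your argument is a valid and somewhat leaner alternative; the paper's shell-based bookkeeping has the advantage of setting up notation and estimates (Remark~\ref{rem:def-n0}, the sums $\sum 2^n\pip(2^n)$) that are reused heavily in the later uniformity lemmas.
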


\begin{proof}
We fix $x_0\in\Om_2$, and we choose $x_j\in\Om_j$ for each integer $j\ge 3$. Let $\beta_j$ be a $C_U$-uniform curve in
$\Om$ with end points $x_0,x_j$. Since $\Om$ is locally compact, we can exhaust $\Om$ by a sequence of proper
subdomains $D_k$ of $\Om$ such that $x_0\in D_k\Subset D_{k+1}$. Here $D_k\Subset D_{k+1}$ means that the closure of $D_k$ is a compact subset of $D_{k+1}$. By the Arzel\`a-Ascoli theorem, we can then
find a curve $\beta_\infty$ and a subsequence of $\beta_j$, also denoted
$\beta_j$, such that for each $k$ the segments of the curves $\beta_j$ lying in $D_k$ converge uniformly to 
the segment of $\beta_\infty$ in $D_k$. Since each $\beta_j$ is a $C_U$-uniform curve, so is $\beta_\infty$ and each 
of its subcurves.

We now use $\beta_\infty$ to construct a sequence that is Cauchy with respect
to $d_\pip$ but not Cauchy with respect to $d$. 
Since $d(x_0,x_j)\to\infty$, it follows that $\ell_d(\beta_j)\to\infty$ as $j\to\infty$, and so $\ell_d(\beta_\infty)=\infty$. Hence,
by the uniformity of $\beta_\infty$, we know that for each $j\ge 3$ the curve $\beta_\infty$ intersects $\Om_j$.
For each $j\ge 3$, we set $y_j=\beta_\infty(t_j)$ to be the first time $\beta_\infty$ intersects
$\Om_j$. Then for each $m\in\N$, $d(y_j,y_{j+m})\geq 2^{j+m-1}-2^j$, showing that $(y_j)_j$ is not Cauchy with respect to $d$. Let $\gamma_j$ be a subcurve of $\beta_\infty$ with end points $y_j$ and $y_{j+1}$. Then, as $\pip$ is monotone 
decreasing, we have
\begin{align*}
d_\pip(y_j,y_{j+1})\le \ell_\pip(\gamma_j)=\int_{\gamma_j}\pip(d_\Om(\gamma_j(t)))\, dt
  &\le \pip(2^j/C_U)\ell_d(\gamma_j)\\
  &\le \pip(2^j/C_U)\, C_Ud_\Om(y_{j+1})\\
  &\le 2C_U\, 2^j\pip(2^j/C_U).
\end{align*}
By the reverse doubling property of $\pip$, and from the above inequality it follows that
there is a positive constant $C$ depending solely on $C_U$ and
$C_\pip$ such that 
\[
d_\pip(y_j,y_{j+1})\le C\, 2^j\pip(2^j).
\]
It follows that for each $j\ge 3$ and $m\in\N$,
\[
d_\pip(y_j,y_{j+m})\le C\, \sum_{n=j}^{j+m-1}2^n\pip(2^n).
\]
The above inequality and~\eqref{eq:pip-reverse-doubling} guarantee that the sequence
$(y_j)_j$ is Cauchy with respect to $d_\pip$.

Let $(z_j)_j$ be another sequence in $\Om$
that is Cauchy with respect to $d_\pip$ but not with respect to $d$. Then we know that 
$\lim_jd_\Om(z_j)=\infty$. Indeed, if there is some $k_0\in\N$ such that each $z_j\in\bigcup_{n=0}^{k_0}\Om_n$, then
for $j,k\in\N$ and $\gamma$ any curve connecting $z_j$ and $z_k$ with $\ell_\pip(\gamma)\le \tfrac{11}{10}d_\pip(z_j,z_k)$ 
we have either that
$\gamma$ lies in $\bigcup_{n=0}^{k_0+1}\Om_n$, in which case
\[
\ell_\pip(\gamma)=\int_\gamma\pip(d_\Om(\gamma(t)))\, dt\ge \pip(2^{k_0+1})\ell_d(\gamma)\ge \pip(2^{k_0+1})d(z_j,z_k),
\]
or else that $\gamma$ intersects $\Om_{k_0+2}$, in which case
\[
\ell_\pip(\gamma)\ge \int_{\gamma\cap\Om_{k_0+1}}\pip(d_\Om(\gamma(t)))\, dt \ge \pip(2^{k_0+1})\ell_d(\gamma\cap\Om_{k_0+1})\ge 2^{k_0}\pip(2^{k_0+1}).
\]
In both of these cases, we get that $\limsup_{j,k\to\infty}d_\pip(z_j,z_k)>0$, violating the $d_\pip$-Cauchy property.

To complete the proof of the lemma, we show that $(z_j)_j$ is $d_\pip$-equivalent to $(y_j)_j$, that is,
$\lim_jd_\pip(z_j,y_j)=0$. Since $\lim_jd_\Om(z_j)=\infty$ and $\lim_jd_\Om(y_j)=\infty$, for each integer $m\ge 2$ 
both $z_j$ and $y_j$ belong to $\bigcup_{n=m}^\infty\Om_n$ for sufficiently large $j$. So, for sufficiently large $j$,
with $\alpha_j$ a $C_U$-uniform curve in $\Om$ with end points $z_j$ and $y_j$, let $\alpha_j:[0,L]\to\Om$ be the standard
arclength parametrization of $\alpha_j$ (with respect to the metric $d$). Then with $n_0$ the positive integer for which
$2^{n_0-1}<C_U\le 2^{n_0}$,
\begin{align*}
\ell_\pip(\alpha_j\vert_{[0,L/2]})=\sum_{n=m-n_0}^\infty \ell_\pip(\alpha_j\cap\Om_n)
  &\le \sum_{n=m-n_0}^\infty \pip(2^n)\ell_d(\alpha_j\cap\Om_n)\\
  &\le \sum_{n=m-n_0}^\infty\pip(2^n)\, C_U\, 2^n.
\end{align*}
An analogous treatment of $\alpha_j\vert_{[L/2,L]}$ then tells us that
\[
\ell_\pip(\alpha_j)\le 2\,C_U\, \sum_{n=m-n_0}^\infty2^n\, \pip(2^n),
\]
which goes to zero as $m\to\infty$ by~\eqref{eq:pip-reverse-doubling}. 
\end{proof}

\begin{remark}\label{rem:def-n0}
Here, and in the rest of the paper, $n_0$ is the positive integer such that $2^{n_0-1}\le C_U< 2^{n_0}$.
The above proof also yields an additional property, namely, that if $y,z\in\bigcup_{n=m}^\infty\Om_n$ for some $m\ge n_0$,
then  with $\gamma$ a $C_U$-uniform curve with end points $x$ and $y$, we have 
\[
d_\pip(y,z)\le \ell_\pip(\gamma)\le 2\, C_U\, \sum_{n=m-n_0}^\infty2^n\, \pip(2^n).
\]
\end{remark}

\begin{remark}\label{rem:useful1}
 From the fact that $\int_0^\infty \pip(t)\, dt$ is finite, we see that whenever $c>0$, then there is a positive integer $N_c$ such that
 whenever $n\in\N$ with $\int_{2^n}^\infty\pip(t)\, dt\ge c$, we have $n\le N_c$.
\end{remark}

\begin{lemma}\label{lem:d-bdry-to-phi}
$\partial\Om_\pip=\partial\Om$. Moreover, the extensions of the metrics $d$ and $d_\pip$ to $\partial\Om$ and $\partial\Om_\pip$
are locally bi-Lipschitz.  
\end{lemma}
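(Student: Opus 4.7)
The plan is to reduce both claims to a local bi-Lipschitz comparison of $d$ and $d_\pip$ on the balls $B_d(\zeta,r_\zeta)\cap\Om$ for $\zeta\in\partial\Om$ and $r_\zeta$ a small radius depending only on $C_U$. The forward estimate $d_\pip\le C_U d$ is already available globally, so the crux will be the reverse bound $d\le 2\,d_\pip$ near the boundary, which in turn reduces to trapping quasi-minimizing $d_\pip$-curves inside $\Om_0$, where $\pip\equiv 1$ and the two metrics coincide.

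To carry this out I would fix $\zeta\in\partial\Om$, set $r_\zeta:=\tfrac{1}{4C_U+2}$, and consider $x,y\in B_d(\zeta,r_\zeta)\cap\Om$. A $C_U$-uniform curve $\gamma$ joining $x$ to $y$ has $\ell_d(\gamma)\le C_U d(x,y)<2C_U r_\zeta$, so every point $z\in\gamma$ satisfies $d_\Om(z)\le d_\Om(x)+\ell_d(\gamma)<(2C_U+1)r_\zeta=\tfrac{1}{2}<1$, placing $\gamma$ inside $\Om_0$ and giving $d_\pip(x,y)\le \ell_\pip(\gamma)=\ell_d(\gamma)\le C_U d(x,y)$. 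For the reverse bound I would pick a curve $\sigma$ from $x$ to $y$ with $\ell_\pip(\sigma)\le 2 d_\pip(x,y)$ and argue by contradiction: if $\sigma$ left $\Om_0$, then by continuity of $d_\Om\circ\sigma$ there would be a first point $z^*\in\sigma$ with $d_\Om(z^*)=1$, and the initial subcurve, lying in $\Om_0$, would contribute $\ell_\pip\ge \ell_d\ge 1-r_\zeta$. This is incompatible with $\ell_\pip(\sigma)\le 2 d_\pip(x,y)\le 2C_U d(x,y)<4C_U r_\zeta<1-r_\zeta$ by the choice of $r_\zeta$, so $\sigma\subset\Om_0$ and $d(x,y)\le \ell_d(\sigma)=\ell_\pip(\sigma)\le 2 d_\pip(x,y)$.

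Once the local bi-Lipschitz equivalence on $B_d(\zeta,r_\zeta)\cap\Om$ is in place, I would use it to identify $\partial\Om$ with $\partial\Om_\pip$. Each $\zeta\in\partial\Om$ is the $d$-limit of a sequence in $\Om$ eventually inside $B_d(\zeta,r_\zeta/2)\cap\Om$; such a sequence is $d_\pip$-Cauchy because $d_\pip\le C_U d$ globally, and Lemma~\ref{lem:noncomplete} rules out $\infty$ as its $d_\pip$-limit because the sequence is $d$-Cauchy. Distinct boundary points $\zeta\ne\eta$ produce distinct $d_\pip$-limits via the same dichotomy: any candidate curve either stays in $\Om_0$ (so $\ell_\pip=\ell_d$ is bounded below by $d(\zeta,\eta)-o(1)$) or exits $\Om_0$ (costing at least $1-\max\{r_\zeta,r_\eta\}$ in $\ell_\pip$). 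The bi-Lipschitz estimate then extends by continuity to $B_d(\zeta,r_\zeta)\cap(\Om\cup\partial\Om)$, yielding the stated local bi-Lipschitz equivalence of the extended metrics. Conversely, if $p\in\partial\Om_\pip$ is the $d_\pip$-limit of some $(z_j)\subset\Om$, Lemma~\ref{lem:noncomplete} forces $(z_j)$ to be $d$-Cauchy (otherwise $p=\infty\in\Om_\pip$), and its $d$-limit $\xi\in\overline\Om$ coincides with $p$ via $d_\pip\le C_U d$; moreover $\xi\in\partial\Om$, since $\xi\in\Om$ would put $p$ in $\Om_\pip$.

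The hardest step will be the excursion argument in the reverse bi-Lipschitz bound: a priori an almost $d_\pip$-optimal curve between two boundary-adjacent points could wander into $\{d_\Om>1\}$ to exploit small values of $\pip$, and it is precisely the uniform $\ell_\pip\ge 1-r_\zeta$ lower bound for any excursion out of $\Om_0$ that rules this out, once $r_\zeta$ is taken small enough relative to $C_U$.
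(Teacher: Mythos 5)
Your argument follows the same mechanism as the paper's proof: everything rests on the dichotomy that a near-optimal $d_\pip$-curve between two points close to $\partial\Om$ either stays inside $\Om_0$, where $\pip\equiv 1$ and hence $\ell_\pip=\ell_d$, or must pay a fixed $\ell_\pip$-price of roughly $1$ to escape $\Om_0$. You package this as a two-sided comparison on the balls $B_d(\zeta,r_\zeta)\cap\Om$ before identifying the boundaries, whereas the paper argues directly with Cauchy sequences, but the estimates (and the choice of a small radius of order $1/C_U$) are essentially identical. The forward bound, the excursion argument for the reverse bound, the separation of distinct boundary points, and the converse inclusion $\partial\Om_\pip\subset\partial\Om$ via Lemma~\ref{lem:noncomplete} are all correct.

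The one genuine gap is the claim that ``Lemma~\ref{lem:noncomplete} rules out $\infty$ as its $d_\pip$-limit because the sequence is $d$-Cauchy.'' That lemma only asserts that any two $d_\pip$-Cauchy sequences which are \emph{not} $d$-Cauchy are mutually $d_\pip$-equivalent; it says nothing about whether a $d$-Cauchy sequence can be $d_\pip$-equivalent to that class, and this is precisely the point the paper proves separately (it is needed to know that $\zeta$ and $\infty$ are distinct points of the $d_\pip$-completion, i.e.\ that $\infty\notin\partial\Om_\pip$). The fix is already in your toolbox: the proof of Lemma~\ref{lem:noncomplete} shows that any representative $(y_j)_j$ of $\infty$ satisfies $d_\Om(y_j)\to\infty$, so for large $j$ any curve from $x_j$ (with $d_\Om(x_j)<r_\zeta$) to $y_j$ must cross all of $\Om_0$ and therefore has $\ell_\pip\ge 1-r_\zeta$; hence $d_\pip(x_j,y_j)\ge 1-r_\zeta>0$ and the two sequences are not $d_\pip$-equivalent. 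With that sentence added, your proof is complete.
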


We point out here that this lemma does not require $\Om$ to be a uniform domain,
but we do need $\Om$ to be $C_q$-quasiconvex with respect to the metric $d$; in this case the reader should
replace $C_U$ in the following proof with $C_q$, the quasiconvexity constant. 

\begin{proof}
If $(x_j)_{j}$ is a sequence in $\Omega$ that converges to a point $\zeta\in\partial\Om$
with respect to the metric $d$, then we have that 
this sequence is also Cauchy in $\Om_\pip$. 
Moreover, for sufficiently large $j,k$ we have that
$d(x_j,x_k)<\tfrac{1}{10C_U}$ and 
$d(x_j,\zeta)<\tfrac{1}{10C_U}$. 
It follows that for sufficiently large $j$, $x_j\in\Om_0$, and
if $\gamma$ is a curve in $\Om$ with end points $x_j$ and $x_k$ such that
$\ell_\pip(\gamma)\le \tfrac{11}{10}d_\pip(x_j,x_k)$,
then $\gamma$ cannot leave $\Om_0$; for if it does, then
\[
\tfrac{11}{10}d_\pip(x_j,x_k)\ge \ell_\pip(\gamma)=\int_\gamma \pip(d_\Om(\gamma(t)))\, dt\ge \ell_d(\gamma_0)\ge \tfrac{9}{10},
\]
where $\gamma_0$ is the largest subcurve of $\gamma$ with one end point $x_j$ and such that $\gamma_0\subset\Omega_0$.
Since $d_\pip(x_j,x_k)\le C_U d(x_j,x_k)< \tfrac{1}{10}$, this leads to a contradiction. 

We now show that the sequence $(x_j)_j$, which is Cauchy with respect to both $d$ and $d_\pip$, cannot be 
$d_\pip$-equivalent to any $d_\pip$-Cauchy sequence $(y_j)_j$ that is not a $d$-Cauchy sequence (see 
Lemma~\ref{lem:noncomplete} for the existence and uniqueness of such a sequence $(y_j)_j$, which is 
denoted in this paper by $\infty$). Indeed, as $(y_j)_j$ is not 
Cauchy with respect to $d$, it is not equivalent to $(x_j)_j$ with respect to the metric $d$. Hence there is some
$0<c<1/(10C_U)$ such that for sufficiently large $j$ (perhaps after passing to a subsequence if necessary), 
we have that $d(x_j,y_j)>c$. Now, if $\beta$ is any curve
in $\Om$ connecting $x_j$ to $y_j$, we must then have that $\beta$ starts from $x_j$ and leaves the ball
$B_d(x_j,c)\subset \Om_0$, and so
\[
\ell_\pip(\beta)\ge \int_{\beta\cap B_d(x_j,c/2)} \, ds\ge c/2.
\]
Taking the infimum over all such $\beta$ gives us $d_\pip(x_j,y_j)\ge c/2$.
It follows that $(x_j)_j$ cannot be equivalent to $(y_j)_j$ with respect to the metric $d_\pip$, that is, $(x_j)_j$ cannot
converge to $\infty$ in the metric $d_\pip$.

Moreover, if $(x_j)_{j}$ and $(y_j)_{j}$ are two non-equivalent $d$-Cauchy sequences in $\Om$, converging to two distinct
points $\zeta,\eta\in\partial\Om$, then for sufficiently large $j$ we have that $d(x_j,y_j)\ge \tau=d(\zeta,\eta)/2>0$. In this
case, any curve $\gamma$ connecting $x_j$ to $y_j$ in $\Omega$ must have length $\ell_d(\gamma)\ge \tau$. 
If such $\gamma$ does not stay within $\Om_0$, then an argument as above tells us that $\ell_\pip(\gamma)\ge \tfrac{9}{10}$
when $j$ is large. If $\gamma$ stays entirely within $\Om_0$, then $\ell_\pip(\gamma)=\ell_d(\gamma)\ge \tau$. It follows
that $d_\pip(x_j,y_k)\ge \min\{\tau,\tfrac{9}{10}\}>0$, and thus the two sequences are not Cauchy-equivalent with respect to the
metric $d_\pip$ either. That is, $\partial\Om\subset \partial\Om_\pip$.

Now suppose that $(x_j)_{j}$ is a Cauchy sequence in $\Om_\pip$ that does not converge. Then, in particular, there is some
$r>0$ such that for sufficiently large $j$ we have that $x_j\not\in B_\pip(\infty, r)$. It follows then from 
Remark~\ref{rem:useful1} that there is some $k_0\in\N$ such that when $j$ is sufficiently large, we have 
$x_j\in\bigcup_{n=0}^{k_0}\Om_n$. For such sufficiently large $j,k\in\N$, let $\gamma$ be a curve in $\Om$ with end points
$x_j,x_k$ such that $d_\pip(x_j,x_k)\le \tfrac{11}{10}\ell_\pip(\gamma)$, then we consider two cases. Either $\gamma$ is
entirely inside $\bigcup_{n=0}^{2k_0}\Om_n$, in which case we have
\[
\ell_\pip(\gamma)\ge \pip(2^{2k_0})\, \ell_d(\gamma)\ge \pip(2^{2k_0})d(x_j,x_k),
\]
or else $\gamma$ intersects $\Omega_{2k_0+1}$, in which case, we have that
\[
\ell_\pip(\gamma)
   \ge \pip(2^{2k_0})\ell_d(\gamma\cap\Om_{2k_0})\ge 2^{2k_0-1}\pip(2^{2k_0}).
\]
This latter case is not possible for sufficiently large $j$ and $k$, since by choice, 
$\lim_{j,k\to\infty} d_\pip(x_j,x_k)=0$. The former case is therefore
the only possibility for sufficiently large $j,k$, and hence $(x_j)_{j}$ is Cauchy with respect to the original metric $d$.
As this sequence does not converge with respect to $d_\pip$, it follows that it does not converge in $\Om$ with respect to $d$ 
either; hence, $\partial\Om_\pip\subset\partial\Om$.

The above argument also shows that if $\zeta,\eta\in\partial\Om$ with $d(\zeta,\eta)\le \tfrac{1}{10}$, then
$d(\zeta,\eta)\le d_\pip(\zeta,\eta)\le C_Ud(\zeta,\eta)$, where we used the quasiconvexity of $\Om$ with 
respect to the metric $d$. Thus, the two metrics are locally bi-Lipschitz.  
\end{proof}

For $x\in\Om_\pip$, we set
\[
d_{\Om_\pip}(x):=\pipdist(x,\partial\Om_\pip):=\inf\{d_\pip(x,\zeta)\, :\, \zeta\in\partial\Om_\pip\}. 
\]
We now consider some preliminary lemmas that will be useful in showing that $\Omega_\pip$ is uniform. 

When $m$ is a non-negative integer and $\gamma$ is a curve in $\Omega$ that intersects both $\Om_m$ and $\Om_{m+2}$, then
\begin{equation}\label{eq:crossing-levels}
\ell_\pip(\gamma)\ge \frac{\pip(2^m)}{C_\pip^2}\ell_d(\gamma\cap\Om_{m+1}))\ge \frac{\pip(2^m)}{C_\pip^2}\, 2^m.
\end{equation}

\begin{lemma}\label{lem:nearby-points}
Let $x\in\Om_m$ for some integer $m\ge 0$. If $y\in\Om$ is such that 
\[
d_\pip(x,y)<\left[\min\left\{\tfrac{10}{11C_\pip^2}\, 2^{-2},\tfrac{10}{{22}C_q^2}\right\}\right]\, \pip(2^m)\, 2^m, 
\]
then 
\[
\pip(2^{m+1})\, d(x,y)\le \tfrac{11}{10}\, d_\pip(x,y)\le C_A\,\pip(2^{m})\, d(x,y).
\]
In particular, $C_A^{-1}\, \pip(2^m)\, d(x,y)\le d_\pip(x,y)\le C_A\pip(2^m)\, d(x,y)$.
\end{lemma}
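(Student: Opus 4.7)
My plan is to prove the two inequalities separately, by confining curves joining $x$ and $y$ to a narrow band of annuli near level $m$, where $\pip\circ d_\Om$ is comparable to $\pip(2^m)$ up to doubling constants.

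For the lower inequality, I would take any curve $\gamma$ from $x$ to $y$ with $\ell_\pip(\gamma)\le\tfrac{11}{10}d_\pip(x,y)$. Applying~\eqref{eq:crossing-levels} at level $m$ rules out $\gamma\cap\Om_{m+2}\ne\emptyset$, and when $m\ge 2$, applying it at level $m-2$ together with the monotonicity $\pip(2^{m-2})\ge\pip(2^m)$ rules out $\gamma\cap\Om_{m-2}\ne\emptyset$; each scenario would force $\ell_\pip(\gamma)\ge \pip(2^m)2^m/(4C_\pip^2)$, contradicting the first clause of the hypothesis. Hence $\gamma\subset\bigcup_{k=\max\{0,m-1\}}^{m+1}\Om_k$, on which $d_\Om\le 2^{m+1}$, so $\pip\circ d_\Om\ge\pip(2^{m+1})$ along $\gamma$ and
\[
\pip(2^{m+1})\,d(x,y)\le\pip(2^{m+1})\,\ell_d(\gamma)\le\ell_\pip(\gamma)\le\tfrac{11}{10}\,d_\pip(x,y).
\]

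For the upper inequality, the containment just derived also yields $y\in\bigcup_{k=\max\{0,m-1\}}^{m+1}\Om_k$, so $d_\Om(y)>2^{m-2}$ when $m\ge 2$. I would then pick a $C_U$-uniform curve $\tilde\gamma:[0,L]\to\Om$ with endpoints $x,y$, for which $\ell_d(\tilde\gamma)\le C_U\,d(x,y)$. For each $z=\tilde\gamma(t)$, writing $a=\ell_d(\tilde\gamma|_{[0,t]})$ and $b=\ell_d(\tilde\gamma|_{[t,L]})$, the uniform cone condition gives $d_\Om(z)\ge\min\{a,b\}/C_U$, while the triangle inequality gives $d_\Om(z)\ge d_\Om(x)-a$ and $d_\Om(z)\ge d_\Om(y)-b$. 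A short optimization, splitting into the cases $a\le b$ and $b\le a$ and balancing the relevant pair of bounds against each other, yields
\[
d_\Om(z)\ge\frac{\min\{d_\Om(x),d_\Om(y)\}}{C_U+1}>\frac{2^{m-2}}{C_U+1}\ge 2^{m-3-n_0}
\]
along $\tilde\gamma$ (using $C_U<2^{n_0}$). Iterating the doubling property of $\pip$ then gives $\pip(d_\Om(z))\le C_\pip^{n_0+3}\pip(2^m)$ on $\tilde\gamma$, so
\[
d_\pip(x,y)\le\ell_\pip(\tilde\gamma)\le C_\pip^{n_0+3}\,\pip(2^m)\,\ell_d(\tilde\gamma)\le C_\pip^{n_0+3}C_U\,\pip(2^m)\,d(x,y),
\]
which is the desired upper bound with $C_A$ of the form $\tfrac{11}{10}C_\pip^{n_0+3}C_U$. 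The degenerate cases $m\in\{0,1\}$ need no optimization: since $\pip(2^m)\ge 1/C_\pip$, the trivial estimate $\pip\circ d_\Om\le 1\le C_\pip\,\pip(2^m)$ suffices along $\tilde\gamma$.

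The main technical obstacle is the optimization in the upper-bound step. The cone-condition bound $\min\{a,b\}/C_U$ is weakest near the endpoints of $\tilde\gamma$, while the triangle-inequality bounds $d_\Om(x)-a$ and $d_\Om(y)-b$ decay toward the interior; balancing the two to extract a lower bound on $d_\Om$ along $\tilde\gamma$ that scales correctly with $2^m$ is where the real work lies. Notably, the hypothesis on $d_\pip(x,y)$ is used only in the first step to pin the location of $y$; once this is secured, the upper bound follows from the uniform-curve analysis with no further smallness condition.
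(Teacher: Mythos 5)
Your proof is correct, and the lower-bound half is essentially the paper's argument: use \eqref{eq:crossing-levels} to confine any near-optimal curve $\gamma$ to $\Om_{m-1}\cup\Om_m\cup\Om_{m+1}$ and then read off $\ell_\pip(\gamma)\ge\pip(2^{m+1})\ell_d(\gamma)$. Your upper bound, however, goes a genuinely different route. The paper takes a $C_q$-quasiconvex curve $\beta$ with $\ell_d(\beta)\le C_q\,d(x,y)$ and controls $d_\Om$ along $\beta$ purely from the smallness of $d(x,y)$ (which is why the hypothesis carries a $C_q^2$-dependent threshold, and why the proof splits into the cases $C_q<2C_\pip$ and $C_q\ge 2C_\pip$). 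You instead take a $C_U$-uniform curve and invoke the cone condition $d_\Om(z)\ge\min\{a,b\}/C_U$, balanced against $d_\Om(z)\ge d_\Om(x)-a$ and $d_\Om(z)\ge d_\Om(y)-b$, to get the clean and scale-correct bound $d_\Om(z)\ge\min\{d_\Om(x),d_\Om(y)\}/(C_U+1)$, with no further use of the smallness hypothesis; your optimization is sound, and the resulting band estimate $\pip(d_\Om(z))\le C_\pip^{n_0+3}\pip(2^m)$ and the degenerate $m\in\{0,1\}$ case check out.

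There are two trade-offs to flag. First, the paper's remark after the lemma explicitly claims it holds for merely quasiconvex $\Om$, not just uniform domains; your proof uses the cone condition and therefore proves a slightly less general statement (though, under the standing assumptions of the paper, no harm is done). Second, your constant $C_A$ comes out depending on $C_\pip$ and $C_U$ (through $n_0$), whereas the paper's $C_A=C_qC_\pip^{k_0+1}$ depends only on $C_\pip$ and $C_q$ --- and this dependency is relied on verbatim in the statement of Lemma~\ref{small} (``$C_A$ \dots depends only on $C_\pip$ and the quasiconvexity constant $C_q$''). Since $C_q\le C_U$ for a uniform domain this causes no error downstream, but if one adopts your proof one should update that parenthetical. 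A nice feature you correctly observe is that with the uniform-curve argument the $\tfrac{10}{22C_q^2}$ term in the hypothesis is never invoked; the paper needs it precisely because it works with quasiconvex rather than uniform curves.
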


Here the constant $C_\pip$ is the reverse doubling constant, preventing uncontrolled decay of the 
dampening function $\pip$.
It follows from the above lemma that the two metrics are locally bi-Lipschitz equivalent in $\Om$, and generate the same 
topology there. This lemma is applicable even when $\Om$ is not a uniform domain, but we need $\Om$ to be quasiconvex.

\begin{proof}
Let $\gamma$ be a curve in $\Om$ connecting $x$, $y$ such that 
$\ell_\pip(\gamma)\le \tfrac{11}{10}\, d_\pip(x,y)$. 
Then 
\[
\ell_\pip(\gamma)<\pip(2^m)\, 2^{m-2}/C_\pip^2,
\] 
and so by~\eqref{eq:crossing-levels}
we have that $\gamma$ does not intersect $\Om_{m+2}$. If $m\ge2$ and $\gamma$ intersects $\Om_{m-2}$, then 
by~\eqref{eq:crossing-levels} again, we would have
$\ell_\pip(\gamma)\ge \pip(2^{m-2})\, 2^{m-2}/C_\pip^2\ge \pip(2^m)\, 2^{m-2}/C_\pip^2$, 
{which again violates the above inequality.}
It follows that $\gamma\subset\Om_{m-1}\cup\Om_m\cup\Om_{m+1}$, where, for convenience, we set $\Om_{n}=\emptyset$
when $n$ is a negative integer. Hence
\begin{equation}\label{eq:first-inequality}
\tfrac{11}{10}d_\pip(x,y)\ge \ell_\pip(\gamma)\ge \pip(2^{m+1})\ell_d(\gamma)\ge \pip(2^{m+1})d(x,y),
\end{equation}
which proves the first inequality of the desired double inequality claimed in the lemma.

On the other hand, as $\Omega$ is a quasiconvex space, 
we can find a curve $\beta$ with end points $x,y$ such that $\ell_d(\beta)\le C_qd(x,y)$ where $C_q$ is the quasiconvexity 
constant of the metric $d$ on $\Om$. 
We consider two cases, $C_q<2C_\pip$ and $C_q\ge 2C_\pip$.

In the first case, $C_q< 2 C_\pip$. Here we use that $d_\pip(x,y)<\tfrac{10}{11C_\pip^2}\pip(2^m)\, 2^{m-2}$ 
and so~\eqref{eq:first-inequality} implies that $d(x,y)<C_\pip^{-1}\, 2^{m-2}$. Hence, for each $z$ in the trajectory of $\beta$,
\[
 d_\Om(z)\le d_\Om(x)+\ell_d(\beta) < 2^{m}+\frac{C_q}{C_\pip} 2^{m-2}=\left(1+\frac{C_q}{4C_\pip}\right)2^m<  2^{m+1}
\]
and 
\[
d_\Om(z)\ge d_\Om(x)-\ell_d(\beta) \ge 2^{m-1}-\frac{C_q}{C_\pip} 2^{m-2}=A_1\, 2^{m-1}.
\]
Here, $A_1=1-\frac{C_q}{2C_\pip}>0$.

In the second case $C_q\ge 2 C_\pip$. Here we use that $d_\pip(x,y)<\tfrac{10}{{22}C_q^2}\pip(2^m)\,2^{m}$, 
and so it follows from~\eqref{eq:first-inequality} that $d(x,y)<C_\pip C_q^{-2}\, 2^{{m-1}}$. 
Hence, for each $z$ in the trajectory of $\beta$,
\[
d_\Om(z)\le d_\Om(x)+\ell_d(\beta) \le 2^{m}+\frac{C_\pip}{C_q} 2^{m}=\left(1+\frac{C_\pip}{C_q}\right)2^m\leq 2^{m+1}
\]
and 
\[
d_\Om(z)\ge d_\Om(x)-\ell_d(\beta) \ge 2^{m-1}-\frac{C_\pip}{C_q} 2^{{m-1}}=A_2\, 2^{m-1}.
\]
Here, $A_2=1-\frac{C_\pip}{C_q}>0$. 

{Let $k_0$ be the positive integer such that $2^{-k_0}<A_1\le 2^{1-k_0}$ if $C_q<2C_\pip$ 
or the positive integer such that $2^{-k_0}<A_2\le 2^{1-k_0}$ if $C_q\ge 2C_\pip$.}
In either case, it 
follows that $\beta\subset \bigcup_{n=m-k_0}^{m+1}\Om_n$, and so
\[
d_\pip(x,y)\le \ell_\pip(\beta)\le \pip(2^{m-k_0-1})\ell_d(\beta)\le C_q\, C_\pip^{k_0+1}\pip(2^m) d(x,y).
\]
We set $C_A=C_q\, C_\pip^{k_0+1}$ to complete the proof.
\end{proof}

Note that if $x\in\Om_n$ and $d(x,y)<s\,2^n\pip(2^n)$ for sufficiently small $s>0$, 
then $d_\pip(x,y)$ satisfies the hypothesis of Lemma~\ref{lem:nearby-points}. Hence
the lemma also tells us that we should think of balls $B_d(x,s\, 2^n\pip(2^n))$ as Whitney-type balls in $d_\pip$;
note that the doubling property of $\pip$ guarantees that $\pip$ satisfies a Harnack-type condition on these balls, as
outlined in~\cite{BHK}.

\begin{lemma} \label{lem:dist-to-infty}
Let $x\in\Om_m$ for some integer $m\ge n_0+2$. Then 
\[
C_UC_\pip\sum_{n=m-n_0}^\infty2^n\pip(2^n)\ge d_\pip(x,\infty)\ge \frac{5}{11}\sum_{n=m+1}^\infty 2^n\pip(2^n).
\]
Here $n_0$ is the positive integer such that $2^{n_0-1}\le C_U<2^{n_0}$ with $C_U$ the uniformity constant
associated with the uniform domain $(\Om,d)$.
\end{lemma}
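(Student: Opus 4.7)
The plan is to establish the two inequalities separately. The lower bound will be a simple ``foliation'' argument exploiting that any candidate curve from $x$ to $\infty$ must traverse each annular level $\Om_n$ for $n>m$, while the upper bound will come from exhibiting an explicit $C_U$-uniform curve from $x$ to $\infty$ (as constructed via Arzel\`a--Ascoli in Lemma~\ref{lem:noncomplete}) and summing its $d_\pip$-length level by level.

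For the lower bound, I would use that $d_\Om$ is $1$-Lipschitz with respect to $d$. Hence, for any rectifiable curve $\gamma$ in $\Om$ starting at $x\in\Om_m$ and ending at a point $y$ with $d_\Om(y)\ge 2^N$ for some integer $N>m$, and for each integer $n$ with $m<n\le N$, the curve $\gamma$ must contain a subarc lying in $\Om_n$ of $d$-length at least $2^{n-1}$ (the radial width of the annulus). Since $\pip$ is monotone decreasing, $\pip\circ d_\Om\ge \pip(2^n)$ on $\Om_n$, so $\ell_\pip(\gamma\cap\Om_n)\ge 2^{n-1}\pip(2^n)$, and summing yields
\[
\ell_\pip(\gamma)\ge \tfrac{1}{2}\sum_{n=m+1}^{N} 2^n\pip(2^n).
\]
Now fix a Cauchy sequence $(y_j)_j$ representing $\infty$. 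By Lemma~\ref{lem:noncomplete}, $d_\Om(y_j)\to\infty$, so for any fixed $N$ eventually $d_\Om(y_j)\ge 2^N$. Choosing $\gamma_j$ from $x$ to $y_j$ with $\ell_\pip(\gamma_j)\le \tfrac{11}{10}d_\pip(x,y_j)$ and sending $j\to\infty$ (and then $N\to\infty$) gives the claimed bound $d_\pip(x,\infty)\ge \tfrac{5}{11}\sum_{n=m+1}^\infty 2^n\pip(2^n)$.

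For the upper bound, I would construct a $C_U$-uniform curve $\beta:[0,L)\to\Om$ starting at $x$ with $L=\infty$ that accumulates at $\infty$, using the same Arzel\`a--Ascoli subsequential-limit argument as in Lemma~\ref{lem:noncomplete}. Letting $y_j$ denote the first entry of $\beta$ into $\Om_{m+j}$ (with $y_0=x$), uniformity of $\beta$ (the tail beyond any point has infinite $d$-length, so $\ell_d(\beta_{x,z})\le C_U d_\Om(z)$) controls $d_\Om$ from below along each subarc from $y_j$ to $y_{j+1}$, and uniformity also bounds the $d$-length of this subarc by $C_U\, 2^{m+j+1}$. This reproduces the key inequality of Lemma~\ref{lem:noncomplete}, yielding $d_\pip(y_j,y_{j+1})\le C\,2^{m+j}\pip(2^{m+j})$ for a constant $C$ depending on $C_U,C_\pip$, after invoking the reverse doubling of $\pip$. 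Summing in $j$ (and applying an index shift by $n_0$ to recast $\pip(2^{m+j}/C_U)$ in terms of $\pip(2^{m+j-n_0})$ via $\pip(t)\le C_\pip\pip(2t)$) produces
\[
d_\pip(x,\infty)\le \sum_{j\ge 0} d_\pip(y_j,y_{j+1})\le C_UC_\pip\sum_{n=m-n_0}^\infty 2^n\pip(2^n),
\]
as desired. Alternatively, one may bypass the construction and simply apply Remark~\ref{rem:def-n0} to the pair $x,y_j\in\bigcup_{n\ge m}\Om_n$ for $j$ large, then pass to the limit $j\to\infty$.

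The main obstacle, as elsewhere in this section, is tracking constants cleanly in the upper bound. Converting the naive estimate $\pip(d_\Om(z))\le \pip(2^{m+j}/C_U)$ on subarcs of $\beta$ into a comparable multiple of $\pip(2^{m+j})$ requires $n_0$ applications of the reverse doubling property, which is the origin of the shifted summation index $m-n_0$ and the $C_\pip$ factor. The lower bound, by contrast, is essentially routine once one observes that $\gamma$ must cross every intermediate annular level.
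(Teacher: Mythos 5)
Your proposal is correct and follows essentially the same route as the paper: the lower bound is the annulus-crossing estimate applied to near-geodesics and then sent to the limit, and the upper bound comes from constructing a $C_U$-uniform curve to $\infty$ via Arzel\`a--Ascoli and summing its $\pip$-weighted length level by level. Your treatment of the endpoint $\infty$ via limits of finite curves from $x$ to $y_j$ is slightly more careful than the paper's, which passes directly to a nearly $d_\pip$-optimal infinite curve; and the alternative you mention via Remark~\ref{rem:def-n0} is indeed the cleanest path to the upper bound, yielding $2C_U\sum_{n\ge m-n_0}2^n\pip(2^n)$, which (since $C_\pip\ge 2$ is forced by the monotonicity, normalization, and integrability of $\pip$) implies the stated $C_U C_\pip$ bound.
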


\begin{proof}
Let $(x_j)_{j>m}$ be a sequence of points in $\Om$ such that $x_j\in\Om_{j}$. Then this sequence is not convergent in $\Om$.
Let $\beta_j$ be a uniform curve in $\Om$ with end points $x,x_j$. Note that
$\ell_d(\beta_j)\le C_U d(x,x_j)$ where $C_U$ is the uniformity constant of $\Om$. 
Recall from Section~2 that we only consider $C_U$-uniform curves (with respect to the metric $d$) whose each subcurve is
also $C_U$-uniform, see~\cite{BHK} for more on this.
It follows from the local compactness of $\Omega$ and the Arzel\`{a}-Ascoli theorem that there is a locally uniformly convergent
subsequence of the sequence of curves $\beta_j$, that converges to a curve $\beta$ with one end point $x$ and leaving each 
compact subdomain of $\overline{\Om}$; note that each $\beta_j$ lies in $\Om\setminus\Om_0$, and hence so does $\beta$.
We also have $\lim_{t\to\infty}d(x,\beta(t))=\infty$, and so $\beta$ connects $x$ to $\infty$. 
For each positive integer $n$ let $\widehat{\beta_n}=\beta\cap\Om_n$. By the uniformity of each $\beta_j$ we know that
$\beta_j$ does not intersect $\Om_{m-n_0-1}$, and hence neither does $\beta$.
Then
\begin{align*}
d_\pip(x,\infty)\le \ell_\pip(\beta)=\sum_{n=m-n_0}^\infty\ell_\pip(\widehat{\beta_n})
  &\le \sum_{n=m-n_0}^\infty \pip(2^{n-1})\ell_d(\widehat{\beta_n})\\ 
  &\le \sum_{n=m-n_0}^\infty \pip(2^{n-1})C_Ud_\Om(z_n)\\
  &\le C_UC_\pip\sum_{n=m-n_0}^\infty \pip(2^n)\,2^n,
\end{align*}
where $z_n$ is a point in $\widehat{\beta_n}$. In particular, this also means that $d_\pip(x,\infty)$ is finite 
by~\eqref{eq:pip-reverse-doubling}.

Now let $\gamma:[0,\infty)\to\Om$ be any curve in $\Om$ such that $\gamma(0)=x$ and $\lim_{n\to\infty}d(x,\gamma(t))=\infty$
and $\ell_\pip(\gamma)\le \tfrac{11}{10}d_\pip(x,\infty)$. Letting $\gamma_n=\gamma\cap\Om_n$, we see that
\begin{align*}
\frac{11}{10}d_\pip(x,\infty)\ge \sum_{n=m+1}^\infty\ell_\pip(\gamma_n)
  \ge \sum_{n=m+1}^\infty \pip(2^n)\, \ell_d(\gamma_n)
  \ge \sum_{n=m+1}^\infty \pip(2^n)\,2^{n-1}.
\end{align*}
\end{proof}

Thanks to the above lemma, we know that $\Om_\pip$ is a bounded domain. Indeed, when $x,y\in\Om$, we have that
\[
d_\pip(x,y)\le d_\pip(x,\infty)+d_\pip(y,\infty)\le 2C_UC_\pip\sum_{n=0}^\infty 2^n\pip(2^n).
\]

\begin{lemma}\label{lem:dist-pip-bdy}
Let $x\in\Om_m$ for some non-negative integer $m$. If $m>0$, then 
\[
C_UC_\pip\sum_{n=0}^{m+n_0}2^n\pip(2^n)\ge \dOmp(x)\ge \biggl(\frac{50}{121}\biggr)\,\sum_{n=0}^{m-1} 2^n\pip(2^n).
\]
If $m=0$, then $\dOmp(x)= d_\Om(x)$.
Here, as usual, $n_0$ is the positive integer such that $2^{n_0-1}\le C_U< 2^{n_0}$ with $C_U$ the uniformity constant
associated with the uniform domain $(\Om,d)$.
\end{lemma}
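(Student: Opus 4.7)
My plan is to mirror the annular-decomposition argument of Lemma~\ref{lem:dist-to-infty}, splitting into the cases $m \ge 1$ and $m = 0$.

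For $m \ge 1$, the upper bound comes from testing against a $C_U$-uniform curve reaching the boundary. I would fix a sequence $\zeta_k \in \partial\Om$ with $d(x, \zeta_k) \to d_\Om(x) \le 2^m$ and use uniformity to connect $x$ to $\zeta_k$ by a $C_U$-uniform curve $\beta_k$ (existence with one endpoint on $\partial\Om$ is recalled at the start of Section~2). The bound $\ell_d(\beta_k) \le C_U\, d(x, \zeta_k)$ combined with $d_\Om(z) \le d_\Om(x) + \ell_d(\beta_k)$ for $z$ on $\beta_k$ traps $\beta_k$ inside $\bigcup_{n=0}^{m+n_0} \Om_n$ for $k$ large. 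Partitioning by annulus and using monotonicity of $\pip$, the doubling $\pip(2^{n-1}) \le C_\pip\, \pip(2^n)$, and the estimate $\ell_d(\beta_k \cap \Om_n) \le C_U\, 2^n$ from Remark~\ref{rem:def-n0}, one obtains
\[
d_\pip(x, \zeta_k) \le \ell_\pip(\beta_k) \le \sum_{n=0}^{m+n_0} \pip(2^{n-1})\, \ell_d(\beta_k \cap \Om_n) \le C_U C_\pip \sum_{n=0}^{m+n_0} 2^n \pip(2^n),
\]
which yields the upper bound after sending $k \to \infty$. For the lower bound, I would pick $\zeta \in \partial\Om$ with $d_\pip(x, \zeta) \le \tfrac{11}{10}\, \dOmp(x)$ and then a curve $\gamma$ from $x$ to $\zeta$ with $\ell_\pip(\gamma) \le \tfrac{11}{10}\, d_\pip(x, \zeta)$. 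Since $d_\Om \circ \gamma$ is continuous and runs from $d_\Om(x) > 2^{m-1}$ down to $0$, the intermediate value theorem forces $\gamma$ to traverse each shell $\Om_n$ for $n = 0, \ldots, m-1$, and the $d$-length of $\gamma \cap \Om_n$ is at least $2^{n-1}$ (the length of the range of $d_\Om$ on $\Om_n$; with $\tfrac{1}{2}$ in place of $2^{n-1}$ when $n = 0$). Monotonicity of $\pip$ then gives
\[
\ell_\pip(\gamma) \ge \sum_{n=0}^{m-1} \pip(2^n)\, 2^{n-1} = \tfrac{1}{2}\, \sum_{n=0}^{m-1} 2^n \pip(2^n),
\]
and chaining the two factors of $\tfrac{10}{11}$ with this $\tfrac{1}{2}$ produces the stated $\tfrac{50}{121}$.

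For $m = 0$ the key point is $\pip \equiv 1$ on $\overline{\Om_0}$, so $\ell_\pip$ and $\ell_d$ coincide on curves confined to $\Om_0$. The inequality $\dOmp(x) \ge d_\Om(x)$ follows from a dichotomy: any curve $\gamma$ from $x$ to $\zeta \in \partial\Om$ either stays inside $\Om_0$, in which case $\ell_\pip(\gamma) = \ell_d(\gamma) \ge d(x, \zeta) \ge d_\Om(x)$, or else leaves $\Om_0$, in which case its final subcurve inside $\Om_0$ (from its last crossing of $\{d_\Om = 1\}$ down to $\zeta$) has $d$-length at least $1 \ge d_\Om(x)$. The reverse inequality $\dOmp(x) \le d_\Om(x)$ is obtained by approximating with boundary points $\zeta$ satisfying $d(x, \zeta) < d_\Om(x) + \eps$ and curves between them whose excursion outside $\Om_0$ uses at most $O(\eps)$ of the total length budget, so that $\ell_\pip \le \ell_d < d_\Om(x) + O(\eps)$.

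The main obstacle I anticipate is the $m = 0$ upper bound: a naive use of quasiconvexity only yields $\dOmp(x) \le C_U\, d_\Om(x)$, and obtaining the sharper inequality $\dOmp(x) \le d_\Om(x)$ requires a careful length-budget argument exploiting that near-geodesic paths from $x$ to close boundary points have their entire $d_\pip$-contribution concentrated inside $\Om_0$ up to lower-order corrections. Once that point is handled, the two cases combine to the stated two-sided estimate.
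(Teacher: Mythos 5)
Your proof of the lower bound and of the $m\ge 1$ upper bound matches the paper's argument essentially line for line: pick a near-optimal boundary point and a near-$d_\pip$-geodesic for the lower bound, decompose annulus by annulus and use that $d_\Om$ is $1$-Lipschitz to extract the $2^{n-1}$ contributions; for the upper bound, use a $C_U$-uniform curve to the boundary, trap it in $\bigcup_{n=0}^{m+n_0}\Om_n$ via $\ell_d(\beta)\le C_U d_\Om(x)$, and sum. Replacing the paper's attained minimizer $\zeta$ by a minimizing sequence $\zeta_k$ is a sensible small refinement since the boundary need not be proper.

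The genuine issue is the $m=0$ case, and you correctly identify where it lives but your sketched fix does not work. Your dichotomy argument for $\dOmp(x)\ge d_\Om(x)$ is sound and coincides with the paper's one-liner. For the reverse inequality, however, your ``length-budget'' plan requires curves $\gamma$ from $x$ to a boundary point with $\ell_d(\gamma)$ arbitrarily close to $d_\Om(x)$; in a general uniform domain one only has the $C_U$-quasiconvex curves, so the best one can conclude is $\dOmp(x)\le C_U\, d_\Om(x)$. Since $\pip\le 1$ everywhere, one always has $d_\pip\le d_{\rm inn}$ (the inner length metric), so equality $\dOmp(x)=d_\Om(x)$ genuinely needs $\Om$ to be a length space near the boundary, which is not a standing assumption. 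In fact the paper's own proof for $m=0$ only establishes $\dOmp(x)\ge d_\Om(x)$ and never justifies the reverse inequality; the stated equality is an overclaim. It is harmless for the rest of the paper because Lemma~\ref{small} only invokes the lower bound $\dOmp(z)\ge d_\Om(z)$ for $z\in\Om_0$ — but you should be aware that the route you propose to the upper bound would fail, and the honest version of the $m=0$ conclusion is $d_\Om(x)\le\dOmp(x)\le C_U\, d_\Om(x)$ in general, with equality when $\Om$ is a length space.
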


\begin{proof}
By Lemma~\ref{lem:d-bdry-to-phi} we know that $\partial\Om_\pip=\partial\Om$. 
Let $\zeta\in\partial\Om$ be such that
$d_\Om(x)=d(x,\zeta)$, and let $\beta$ be a $C_U$-uniform curve (with respect to the metric $d$)
with end points $x$ and $\zeta$. Then, 
\[
2^{m-1}\le d_\Om(x)\le \ell_d(\beta)\le C_U d(x,\zeta)\le C_U\, 2^{m}\le 2^{m+n_0}.
\]
It follows that the trajectory of $\beta$ lies in $\bigcup_{n=0}^{m+n_0}\Om_n$. Therefore,
\begin{align*}
\ell_\pip(\beta)\le\sum_{n=0}^{m+n_0}\pip(2^{n-1})\ell_d(\beta\cap\Om_n)
 &\le C_\pip\, \sum_{n=0}^{m+n_0}\pip(2^n)\, C_Ud_\Om(z_n)\\
 &\le C_UC_\pip\, \sum_{n=0}^{m+n_0}\pip(2^n)\, 2^{n}\\
 &= C_UC_\pip\, \sum_{n=0}^{m+n_0}\pip(2^n)\, 2^n,
\end{align*}
where $z_n$ is a point in $\beta\cap\Om_n$. Hence,
\[
\dOmp(x)\le \ell_\pip(\beta)\le C_UC_\pip\, \sum_{n=0}^{m+n_0}\pip(2^n)\, 2^n.
\]

If $m\ge 1$, then
let $\zeta\in\partial\Om$ such that $\dOmp(x)\ge \tfrac{10}{11}d_\pip(x,\zeta)$, and let $\gamma$ be a curve
in $\Om$ connecting $x$ to $\zeta$ such that $\ell_\pip(\gamma)\le \tfrac{11}{10}d_\pip(x,\zeta)$. Then
\begin{align*}
\ell_\pip(\gamma)\ge \sum_{n=0}^{m-1}\ell_\pip(\gamma\cap\Om_n)
  \ge \sum_{n=0}^{m-1} \pip(2^{n})\ell_d(\gamma\cap\Om_n)
  \ge \sum_{n=0}^{m-1}\pip(2^n) 2^{n-1}.
\end{align*}
It follows that
\[
\dOmp(x)\ge\frac{1}{2} \biggl(\frac{10}{11}\biggr)^{\!2}\, \sum_{n=0}^{m-1}\pip(2^n) 2^{n}.
\]
If $m=0$, then $\pip(t)=1$ for $0< t\le 1$ tells us that $\dOmp(x)\ge d_\Om(x)$.
\end{proof}

\section{Uniform domain property of $\Om_\pip$.}

This section is devoted to the proof of Theorem~\ref{thm:main}, the main theorem of this note.

\begin{lemma}\label{small}
Suppose that $x\in\Om_m$ and $y\in\Om_k$ with $0\le m \le k$. If
\[
d_\pip(x,y)<\tfrac{5}{44\, C_\pip^{n_0+1}C_q^2C_UC_A} 2^{m}\pip(2^m), 
\]
then any $C_U$-uniform
curve with respect to the original metric $d$ with end points $x,y$ is a 
$C_{1}^{\pip}$-uniform curve with respect to the metric $d_\pip$, where
\[
C_1^\pip=\max\left\{ C_AC_\pip^{n_0+1}C_U,\,  
{\tfrac{363\, C_U}{50}}\right\}.
\]
Here $C_A$ is the constant from Lemma~\ref{lem:nearby-points}, which depends only on $C_\pip$ and the 
quasiconvexity constant $C_q$. 
\end{lemma}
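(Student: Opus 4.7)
The plan is to exploit the smallness hypothesis on $d_\pip(x,y)$ to confine any $C_U$-uniform curve $\gamma$ with endpoints $x,y$ to a thin band of annuli around $\Om_m$; on such a band the function $\pip$ is essentially constant (up to reverse doubling), so the $C_U$-uniformity of $\gamma$ with respect to $d$ transfers almost verbatim to $d_\pip$-uniformity. The hard part is not the geometry, which is already packaged in Lemmas~\ref{lem:nearby-points} and~\ref{lem:dist-pip-bdy}, but the absolute-constant bookkeeping needed to route the specific numerical threshold $\tfrac{5}{44\,C_\pip^{n_0+1}C_q^2 C_U C_A}\,2^m\pip(2^m)$ to exactly the claimed $C_1^\pip$.

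First I would check that the threshold in the hypothesis is chosen so that $(x,y)$ satisfies the hypothesis of Lemma~\ref{lem:nearby-points}, which yields $d(x,y)\le C_A\,\pip(2^m)^{-1}\,d_\pip(x,y)<\tfrac{5}{44\,C_\pip^{n_0+1}C_q^2 C_U}\,2^m$. Since $\gamma$ is $C_U$-uniform with respect to $d$, $\ell_d(\gamma)\le C_U\,d(x,y)<2^{m-2}$. The triangle inequality together with $2^{m-1}<d_\Om(x)\le 2^m$ then forces $d_\Om(z)\in(2^{m-2},2^{m+1})$ for every $z$ on the trajectory of $\gamma$; in particular $\gamma\subset\Om_{m-1}\cup\Om_m\cup\Om_{m+1}$ and $k\in\{m,m+1\}$.

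For the length condition, monotonicity of $\pip$ gives $\ell_\pip(\gamma)\le \pip(2^{m-2})\,\ell_d(\gamma)$; two applications of reverse doubling combined with the upper bound on $d(x,y)$ from Lemma~\ref{lem:nearby-points} then deliver $\ell_\pip(\gamma)\le C_A C_\pip^{n_0+1} C_U\,d_\pip(x,y)$, using $n_0+1\ge 2$ to absorb the $C_\pip^2$. For the second uniformity condition at a point $z\in\gamma\cap\Om_n$ (necessarily $n\in\{m-1,m,m+1\}$), I would bound $\min\{\ell_\pip(\gamma_{x,z}),\ell_\pip(\gamma_{z,y})\}\le \pip(2^{m-2})\,C_U\,d_\Om(z)\le \pip(2^{m-2})\,C_U\,2^n$ using the $C_U$-uniformity of $\gamma$ with respect to $d$, and apply Lemma~\ref{lem:dist-pip-bdy} to obtain $\dOmp(z)\ge \tfrac{50}{121}\,2^{n-1}\,\pip(2^{n-1})$ when $n\ge 1$; the boundary case $n=0$ (only possible when $m=0$) is immediate since $\pip\equiv 1$ on $\Om_0$ and $\dOmp=d_\Om$ there. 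Since $|n-m|\le 1$, reverse doubling controls $\pip(2^{m-2})/\pip(2^{n-1})$ by a bounded power of $C_\pip$, and the resulting ratio lands at the claimed $\tfrac{363\,C_U}{50}$.
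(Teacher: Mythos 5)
Your overall architecture is sound and close in spirit to the paper's: both arguments use Lemma~\ref{lem:nearby-points} to convert the $d_\pip$-smallness hypothesis into $d(x,y)<\tfrac{5}{44C_\pip^{n_0+1}C_q^2C_U}2^m$, confine the uniform curve to a band of annuli where $\pip$ is comparable to $\pip(2^m)$, and then obtain the quasiconvexity bound $\ell_\pip(\gamma)\le C_AC_\pip^{n_0+1}C_U\,d_\pip(x,y)$ exactly as you do. Your confinement step is in fact cleaner than the paper's (the triangle inequality $d_\Om(z)\ge d_\Om(x)-\ell_d(\gamma)>2^{m-2}$ pins $\gamma$ into $\Om_{m-1}\cup\Om_m\cup\Om_{m+1}$, versus the paper's wider band $\bigcup_{n=m-n_0}^{m+k_0+n_0}\Om_n$ obtained from the cigar condition), though note your lower bound is vacuous for $m=0$ and that $z\in\Om_0$ can also occur when $m=1$, not only when $m=0$; both of these are harmless.

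The genuine issue is the cigar condition: your pointwise estimate does not land at the claimed constant. From $\min\{\ell_\pip(\gamma_{x,z}),\ell_\pip(\gamma_{z,y})\}\le\pip(2^{m-2})C_U2^n$ and $\dOmp(z)\ge\tfrac{50}{121}2^{n-1}\pip(2^{n-1})$ you get a ratio of $\tfrac{121}{25}C_U\,\pip(2^{m-2})/\pip(2^{n-1})$, and since $n$ can be $m+1$ this is only controlled by $\tfrac{121}{25}C_\pip^2C_U$ after two applications of reverse doubling. That quantity is \emph{not} in general dominated by $\max\{C_AC_\pip^{n_0+1}C_U,\tfrac{363C_U}{50}\}$ (e.g.\ it exceeds $\tfrac{363C_U}{50}$ as soon as $C_\pip^2>\tfrac{3}{2}$, and it need not be below $C_AC_\pip^{n_0+1}C_U$ either), so your argument proves uniformity with a different, generally larger constant than the one stated --- and since $C_1^\pip$ feeds into $C_2^\pip$ and $C_3^\pip$ downstream, the bookkeeping matters here. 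The paper avoids the extra $C_\pip^2$ for $m\ge1$ by a global rather than pointwise estimate: it writes $\dOmp(z)\ge\dOmp(x)-\ell_\pip(\beta)$, uses Lemma~\ref{lem:dist-pip-bdy} to get $\dOmp(x)\ge\tfrac{50}{121}2^{m-1}\pip(2^{m-1})$, and observes that the hypothesis was calibrated precisely so that $\ell_\pip(\beta)\le\tfrac{5}{22C_q^2}2^{m-1}\pip(2^{m-1})$ is a small fraction of $\dOmp(x)$; this gives $\ell_\pip(\beta)\le\tfrac{11}{9}\dOmp(z)$ outright, and the $\tfrac{363C_U}{50}$ term then arises only from the separate $m=0$ analysis. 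To repair your write-up, either adopt that cancellation step for $m\ge1$ or accept a redefinition of $C_1^\pip$ incorporating the extra factor of $C_\pip^2$.
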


\begin{proof}
We first consider the case $m\ge 1$.
By Lemma~\ref{lem:nearby-points}, we have that
$\pip(2^m)\,d(x,y)\le C_A\, d_\pip(x,y)$. It follows that 
\[
d(x,y)\le \frac{C_A}{\pip(2^m)} d_\pip(x,y) \le \tfrac{5}{44\, C_\pip^{n_0+1}C_q^2C_U} 2^{m},
\]
and so
\[
2^{m-1}\le d_\Om(y)\le d(x,y)+d_\Om(x)\le \tfrac{5}{44\, C_\pip^{n_0+1}C_q^2C_U} 2^{m}+2^{m+1}\le C_*\, 2^m.
\]
It follows that if $\beta$ is a $C_U$-uniform curve with respect to $d$ with end points $x,y$ that 
$\beta\subset\bigcup_{n=m-n_0}^{m+k_0+n_0}\Om_n$, where
$n_0$ is  the positive integer such that $2^{n_0-1}\le C_U < 2^{n_0}$, see
Remark~\ref{rem:def-n0}, 
and $k_0$ is the positive integer such that $2^{k_0-1}\le C_*< 2^{k_0}$. Hence,
\begin{align*}
\ell_\pip(\beta)=\sum_{n=m-n_0}^{m+k_0+n_0}\int_{\beta\cap\Om_n}\pip(d_\Om(\beta(t)))\, dt
  &\le \sum_{n=m-n_0}^{m+k_0+n_0}\pip(2^n)\ell_d(\beta\cap\Om_n)\\
  &\le \pip(2^{m-n_0{-1}})\, \ell_d(\beta)\\
  &\le C_\pip^{n_0{+1}}\, \pip(2^m)\, C_U\, d(x,y).
\end{align*}
Using Lemma~\ref{lem:nearby-points} again, we conclude that
\[
d_\pip(x,y)\ge C_A^{-1} \pip(2^m)\, d(x,y)\ge \frac{1}{C_AC_\pip^{n_0{+1}} C_U}\ell_\pip(\beta);
\]
that is, $\beta$ is a quasiconvex curve with respect to the metric $d_\pip$.

Next, if $z$ is a point in the trajectory of $\beta$, then by Lemma~\ref{lem:dist-pip-bdy},
\begin{align*}
\dOmp(z)\ge \dOmp(x)-d_\pip(x,z)
     &\ge \frac{50}{121}\sum_{n=0}^{m-1}2^n\pip(2^n)-\ell_\pip(\beta)\\
     &\ge \frac{50}{121}\, 2^{m-1}\pip(2^{m-1})-C_AC_\pip^{n_0+1}C_Ud_\pip(x,y)\\
     &\ge \frac{50}{121}\, 2^{m-1}\pip(2^{m-1})-\frac{5}{44C_q^2}2^m\pip(2^m)\\
     &\ge \frac{50}{121}\, 2^{m-1}\pip(2^{m-1})-\frac{5}{22C_q^2}2^{m-1}\pip(2^{m-1})\\
     &=\frac{45}{242}2^{m-1}\pip(2^{m-1}).
\end{align*}
As
\[
\ell_\pip(\beta)\le C_AC_\pip^{n_0+1}C_U\, d_\pip(x,y)\le \frac{5}{44C_q^2}\, 2^{m}\pip(2^{m}) 
\le \frac{5}{22C_q^2}\, 2^{m-1}\pip(2^{m-1}),
\]
it follows that $\beta$ is a $C_{1}^{\pip}$-uniform curve with respect to the metric $d_\pip$. 

Now we consider the case $m=0$; that is, $x\in\Om_0$. Then, by the assumption on $y$, we must have that $y\in\Om_0\cup\Om_1$.
If not, then any curve in $\Om$ that connects $x$ to $y$ must have a segment in $\Om_1$ with length at least $2$, and therefore
the $d_\pip$-length of all such curves are at least $2\, \pip(2)\ge 2/C_\pip$ which is larger than the assumed bound on $d_\pip(x,y)$.
Moreover, by Lemma~\ref{lem:nearby-points} we have that $d(x,y)\le 1/(4C_\pip)$.
Hence any $C_U$-uniform curve (in the metric $d$) with end points $x,y$ must lie in $\bigcup_{n=0}^{n_0+1}\Om_n$. Let 
$\beta$ be such a curve. 
We have that $\ell_\pip(\beta)\le \ell_d(\beta)$.
This implies, by Lemma~\ref{lem:nearby-points}, that 
\[
d_\pip(x,y)\geq C_A^{-1}d(x,y)\geq C_A^{-1}C_U^{-1}\ell_d(\beta) \geq C_A^{-1}C_U^{-1}\ell_\pip(\beta), 
\]
meaning that $\beta$ is quasiconvex with respect to the metric $d_\pip$. 

For $z$ in the trajectory of $\beta$, consider the segment $\beta[x,z]$ of $\beta$ with end points $x,z$. As
we require that subcurves of chosen uniform curves (with respect to the metric $d$)
also be uniform, $\beta$ has no loops, and so there is only one such segment.
If $z\in\Om_0$, then 
\[
\dOmp(z)=d_\Om(z)\geq C_U^{-1}\ell_d(\beta[x,z])\geq C_U^{-1}\ell_\pip(\beta[x,z]).
\]
If $z\in\Om_j$ for some $0<j\leq n_0+1$, then
\begin{align*}
\ell_\pip(\beta[x,z])=\sum_{n=0}^{j}\int_{\beta[x,z]\cap\Om_n}\pip(d_\Om(\beta(t)))\, dt
     &\le \sum_{n=0}^{j}\pip(2^n)\ell_d(\beta[x,z]\cap\Om_n)\\
     &\le \sum_{n=0}^{j}\pip(2^n)\ell_d\left(\bigcup_{i=1}^{n}\beta[x,z]\cap\Om_i\right)\\
     &\le C_U\sum_{n=0}^{j}\pip(2^n)2^n.
\end{align*}
Thus, noting that $2^{j-1}\pip(2^{j-1})+2^j\pip(2^j)\leq 3(2^{j-1}\pip(2^{j-1}))$, from Lemma~\ref{lem:dist-pip-bdy} it follows that 
\begin{align*}
\dOmp(z)\geq \frac{50}{121}\sum_{n=0}^{j-1}2^n\pip(2^n)
     &= \frac{50}{121}\left[\sum_{n=0}^{j-2}2^n\pip(2^n)+2^{j-1}\pip(2^{j-1})\right]\\
     &\ge\frac{50}{363}\sum_{n=0}^{j}2^n\pip(2^n)\\
     &\ge \frac{50}{363\,C_U}\ell_\pip(\beta_{x,z}).
\end{align*}
This shows that $\beta$ is a $C_1^\pip$-uniform curve with respect to $d_\pip$. 
\end{proof}

From equation~\eqref{eq:pip-reverse-doubling} it follows that we can fix a positive integer $m_0>n_0+2$ such that 
\begin{equation}\label{eq:m0-cond}
\sum_{n=m_0-n_0}^\infty 2^n\pip(2^n)<\frac{1}{8C_UC_\pip}.
\end{equation}

\begin{lemma}\label{lem:large-k}
Suppose that $x\in\Om_m$ and $y\in\Om_k$ with $m_0\le m \le k$. If $\gamma$ is a curve in $\Om$ with end points
$x$ and $y$ such that $\ell_\pip(\gamma)\le \tfrac{11}{10}d_\pip(x,y)$, then $\gamma$ is a $\tfrac{1331}{669}$-uniform
curve with respect to the metric $d_\pip$.
\end{lemma}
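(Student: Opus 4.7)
The plan is to exploit the fact that, by the choice of $m_0$, both endpoints of $\gamma$ lie extremely close to $\infty$ in $d_\pip$ while simultaneously remaining far from $\partial\Om_\pip$; the near-geodesic $\gamma$ will then have very small total $d_\pip$-length and be unable to stray near the boundary.

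First, I would apply Lemma~\ref{lem:dist-to-infty} to both $x$ and $y$ (using $m, k \ge m_0 > n_0+2$) together with~\eqref{eq:m0-cond} to obtain
\[
d_\pip(x,\infty) + d_\pip(y,\infty) \le 2 C_U C_\pip \sum_{n=m_0-n_0}^\infty 2^n \pip(2^n) < \frac{1}{4}.
\]
The triangle inequality then gives $d_\pip(x,y) < 1/4$, and hence $\ell_\pip(\gamma) \le \tfrac{11}{40}$. Next, since $m \ge m_0 \ge 1$ and the $n=0$ term of the sum in Lemma~\ref{lem:dist-pip-bdy} equals $1$, that lemma gives $\dOmp(x) \ge \tfrac{50}{121}$. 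For any $z$ in the trajectory of $\gamma$, using $d_\pip(x,z) \le \ell_\pip(\gamma_{x,z}) \le \ell_\pip(\gamma)$ in a triangle inequality for $\dOmp$ yields
\[
\dOmp(z) \ge \dOmp(x) - \ell_\pip(\gamma) \ge \frac{50}{121} - \frac{11}{40} = \frac{669}{4840}.
\]

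Both parts of the uniform-curve definition for $d_\pip$ now follow with constant $1331/669$: the length condition is immediate from $\ell_\pip(\gamma) \le \tfrac{11}{10}d_\pip(x,y) \le \tfrac{1331}{669}d_\pip(x,y)$, while the second condition reduces to
\[
\min\{\ell_\pip(\gamma_{x,z}), \ell_\pip(\gamma_{z,y})\} \le \ell_\pip(\gamma) \le \frac{11}{40} = \frac{1331}{669}\cdot \frac{669}{4840} \le \frac{1331}{669}\, \dOmp(z).
\]
There is no genuine obstacle here; the only subtlety worth emphasizing is that the numerical threshold in~\eqref{eq:m0-cond} has been calibrated precisely so that $d_\pip(x,y) < 1/4$, which is exactly the slack needed for $\tfrac{50}{121} - \tfrac{11}{40} > 0$ to survive as a positive lower bound on $\dOmp(z)$. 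The stated uniformity constant $1331/669$ is then the ratio $(11/40)/(669/4840)$ of these two tight numerical bounds.
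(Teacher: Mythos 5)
Your proof is correct and follows essentially the same route as the paper: bound $d_\pip(x,y)<\tfrac14$ via Lemma~\ref{lem:dist-to-infty} and \eqref{eq:m0-cond}, get $\ell_\pip(\gamma)\le\tfrac{11}{40}$, lower-bound $\dOmp(x)\ge\tfrac{50}{121}$ from Lemma~\ref{lem:dist-pip-bdy}, and conclude $\dOmp(z)\ge\tfrac{50}{121}-\tfrac{11}{40}=\tfrac{669}{4840}$ for every $z$ on $\gamma$. The numerical bookkeeping, including the origin of the constant $\tfrac{1331}{669}$, matches the paper exactly.
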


\begin{proof}
Suppose that $x\in\Om_m$ and $y\in\Om_k$ with $k\ge m\ge m_0$. Then by Lemma~\ref{lem:dist-to-infty},
$d_\pip(x,y)\le 2C_UC_\pip\sum_{n=m-n_0}^\infty 2^n\pip(2^n)<\tfrac{1}{4}$, and moreover, by Lemma~\ref{lem:dist-pip-bdy} we 
also have
\[
d_{\Om_\pip}(x)\ge \frac{50}{121}\sum_{n=0}^{m-1}2^n\pip(2^n)\ge \frac{50}{121}.
\]
Similar statement holds also for $d_{\Om_\pip}(y)$. Let $\gamma$ be a curve in $\Om$ with end points $x,y$ such that 
$\ell_\pip(\gamma)\le \tfrac{11}{10}d_\pip(x,y)$. Then $\ell_\pip(\gamma)<\tfrac{11}{40}$. Let $z$ be a point in the trajectory
of $\gamma$; then, 
\[
\dOmp(z)\ge \dOmp(x)-d_\pip(x,z)\ge \frac{50}{121}-\ell_\pip(\gamma)\ge \frac{50}{121}-\frac{11}{40}
  =\frac{669}{4840}.
\]
It follows that
\[
\dOmp(z)\ge \frac{669}{1331}\, \ell_\pip(\gamma),
\]
that is, $\gamma$ is a $\tfrac{1331}{669}$-uniform curve with respect to the metric $d_\pip$.
\end{proof}

In what follows, we denote by $\lambda$ and $\Lambda$ the numbers 
\begin{equation}\label{eq:lam-Lam}
\lambda=\min_{0\leq n \leq m_0+n_0}2^n\pip(2^n)\quad\text{and}\quad\Lambda= \max_{0\leq n \leq m_0+n_0}2^n\pip(2^n).
\end{equation}

\begin{lemma}\label{medium}
Suppose that $x\in\Om_m$ and $y\in\Om_k$ with $0\leq m\leq k \le m_0$, and 
\[
\frac{5}{22C_\pip^2}2^m\pip(2^m)\leq d_\pip(x,y)<C\,2^{m}\pip(2^m). 
\] 
Any $C_U$-uniform
curve with respect to the original metric $d$ with end points $x,y$ lying entirely in $\bigcup_{j=0}^{m_0+n_0}\Om_j$ is a 
$C_{2}^{\pip}$-uniform curve with respect to the metric $d_\pip$.

If the uniform curve is not entirely contained in $\bigcup_{j=0}^{m_0+n_0}\Om_j$, then
with $z_1,z_2$ two points in the trajectory of the curve with the segment between $x$ and $z_1$,
and the segment between $z_2$ and $y$ lying in $\bigcup_{j=0}^{m_0+n_0}\Om_j$, we can replace
the segment between $z_1$ and $z_2$ by a $11/10$-quasiconvex curve with respect to $d_\pip$
with end points $z_1,z_2$ to obtain a $C_2^\pip$-uniform curve with respect to the metric $d_\pip$.

Here
\[
C_2^\pip=\frac{2000}{669}\, \frac{2CC_1^\pip}{T}\, \frac{\Lambda}{\lambda} \, \left(2T_0+\frac{121 C_\pip^2}{20 \lambda}\right).
\]
Moreover, in both cases, for each point $z$ in the trajectory of $\beta$ (resp. $\gamma$), we have that
$C_2^\pip\, d_{\Om_\pip}(z)$ is minorized by the length of the entire curve with respect to the metric $d_\pip$.
\end{lemma}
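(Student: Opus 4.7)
The plan is to exploit the fact that on the \emph{core region} $\bigcup_{j=0}^{m_0+n_0}\Om_j$, the dampening function $\pip\circ d_\Om$ takes values bounded away from $0$, so that by the definitions of $\lambda,\Lambda$ in~\eqref{eq:lam-Lam} and the reverse doubling of $\pip$, the metrics $d$ and $d_\pip$ restricted to curves in this core region are bi-Lipschitz equivalent with constants comparable to $\Lambda/\lambda$. The hypotheses locate $x$ and $y$ in this core region and give both an upper and a lower bound on $d_\pip(x,y)$ of order $2^m\pip(2^m)$, so that $d_\pip$ and $d$ are quantitatively comparable between $x$ and $y$.

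In the first case, where the $C_U$-uniform curve $\beta$ with respect to $d$ lies entirely in $\bigcup_{j=0}^{m_0+n_0}\Om_j$, I will verify the two defining properties of a uniform curve with respect to $d_\pip$. For length control, $\ell_\pip(\beta)\le\ell_d(\beta)\le C_U\, d(x,y)$; then $d(x,y)$ is converted back to $d_\pip(x,y)$ by comparing to a near-minimizer $\gamma$ for $d_\pip(x,y)$, which by the upper hypothesis $d_\pip(x,y)<C\,2^m\pip(2^m)\le C\Lambda$ is forced to remain in an enlargement of the core (otherwise~\eqref{eq:crossing-levels} would provide too large a $\pip$-length), yielding $d(x,y)\le (\Lambda/\lambda)\,d_\pip(x,y)$ up to a universal factor. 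For the turning-point condition at $z\in\beta\cap\Om_j$, I combine $\min\{\ell_d(\beta_{x,z}),\ell_d(\beta_{z,y})\}\le C_U\,d_\Om(z)$ with Lemma~\ref{lem:dist-pip-bdy}, which gives $d_{\Om_\pip}(z)\ge\tfrac{50}{121}\,2^{j-1}\pip(2^{j-1})$ when $j\ge 1$ (and $d_{\Om_\pip}(z)=d_\Om(z)$ when $j=0$), while $d_\Om(z)\le 2^j$. Since $j\le m_0+n_0$, the ratio $d_\Om(z)/d_{\Om_\pip}(z)$ is bounded by a constant depending only on $\lambda,\Lambda,m_0,n_0$, which when combined with $\ell_\pip(\cdot)\le\ell_d(\cdot)$ on the core gives the required uniformity.

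In the second case, $\beta$ leaves the core region. Let $z_1$ be the first exit point and $z_2$ the last re-entry point; both lie in $\Om_{m_0+n_0}\cup\Om_{m_0+n_0+1}$. By Remark~\ref{rem:def-n0} together with the choice of $m_0$ in~\eqref{eq:m0-cond}, $d_\pip(z_1,z_2)\le 2C_UC_\pip\sum_{n\ge m_0-n_0}2^n\pip(2^n)<\tfrac{1}{4}$, and more importantly, this quantity is small compared with $\ell_\pip(\beta[x,z_1])+\ell_\pip(\beta[z_2,y])$ which, by Case~1 applied to each end-piece, is $\simeq d_\pip(x,y)\gtrsim \lambda/C_\pip^2$. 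Replacing $\beta[z_1,z_2]$ by a $\tfrac{11}{10}$-quasigeodesic $\eta$ in $d_\pip$ produces a curve $\gamma$ whose $d_\pip$-length is controlled as in Case~1 on the end-pieces plus a negligible term from $\eta$. For the turning-point condition at $z\in\eta$, Lemma~\ref{lem:dist-pip-bdy} gives $d_{\Om_\pip}(z)\ge\tfrac{50}{121}\sum_{n=0}^{m_0-1}2^n\pip(2^n)$, which is bounded below by a fixed positive constant that dominates the entire $\ell_\pip(\gamma)$ up to the constant $C_2^\pip$; at the junction points $z_1,z_2$, both estimates match up to universal factors.

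The main obstacle is not any single inequality but the careful bookkeeping of constants: $C_2^\pip$ involves $\lambda,\Lambda,C_1^\pip,C_\pip$ and the sums appearing in~\eqref{eq:m0-cond}, and one must ensure that the ratio $\Lambda/\lambda$ (which blows up with $m_0$) is absorbed correctly against the lower bound for $d_\pip(x,y)$ coming from the hypothesis. The replacement argument in the second case is also subtle because one must control uniformity uniformly across the glued pieces, which is why the lemma explicitly allows modification of the $d$-uniform curve rather than insisting the original $\beta$ be $d_\pip$-uniform.
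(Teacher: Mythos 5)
Your plan departs from the paper's proof in a substantive way. The paper's key device is to subdivide the $C_U$-uniform curve $\beta$ into pieces $\beta_j$ whose $d_\pip$-endpoint separations are greedily chosen to lie between $T\,2^{m_j-1}\pip(2^{m_j})$ and $T\,2^{m_j}\pip(2^{m_j})$, so that each piece satisfies the hypotheses of Lemma~\ref{small}, and then sums the per-piece estimates; you instead propose to work globally on the core using only $\ell_\pip\le\ell_d$ and the comparison between $d_\Om$ and $d_{\Om_\pip}$ from Lemma~\ref{lem:dist-pip-bdy}. That is a legitimately different and potentially simpler route to the $d_\pip$-quasiconvexity and the ordinary cigar condition, so the comparison is worth recording. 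However, two points need repair.

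First, the step ``$d_\pip(x,y)<C\Lambda$ forces a near-$d_\pip$-minimizer to stay in an enlargement of the core'' is false. The design of $\pip$ makes excursions to high levels cheap in $d_\pip$: since $\sum_n 2^n\pip(2^n)<\infty$, we have $2^n\pip(2^n)\to 0$, so~\eqref{eq:crossing-levels} poses no obstruction to a $d_\pip$-short curve ranging far from $\partial\Om$. Fortunately this step is not needed. What does the work is the $d$-uniformity of $\beta$: in Case~1 the midpoint $z$ of $\beta$ lies in the core, so $d(x,y)\le\ell_d(\beta)\le 2C_U\,d_\Om(z)\le 2^{m_0+n_0+1}C_U$ is an \emph{absolute} bound, and the \emph{lower}-bound hypothesis $d_\pip(x,y)\ge\tfrac{5}{22C_\pip^2}2^m\pip(2^m)\ge\tfrac{5\lambda}{22C_\pip^2}$ then converts $\ell_\pip(\beta)\le\ell_d(\beta)$ into $\ell_\pip(\beta)\lesssim d_\pip(x,y)$. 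The same mechanism, not smallness of $\widehat\beta$ relative to the end-pieces (which need not hold), absorbs $\ell_\pip(\widehat\beta)\le\tfrac{11}{40}$ in Case~2.

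Second, the lemma's ``Moreover'' clause asserts $\ell_\pip$ of the \emph{entire} curve is at most $C_2^\pip d_{\Om_\pip}(z)$ for every $z$ on it, not merely the usual $\min$-of-two-subcurves condition; your turning-point argument ($\min\{\ell_d(\beta_{x,z}),\ell_d(\beta_{z,y})\}\le C_U d_\Om(z)$ plus the $d_\Om$ vs.\ $d_{\Om_\pip}$ comparison) only yields the latter. This gap is not cosmetic: the ``Moreover'' is precisely what Lemma~\ref{lem:cross-border} quotes ($d_{\Om_\pip}(z)\ge\tfrac{1}{C_2^\pip}\ell_\pip(\beta[x,z_1])$ for $z$ on $\beta[x,z_1]$). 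The paper obtains the stronger form from its subdivision: every $z$ lies on some piece $\beta_j$ with $d_\pip(z_j,z_{j-1})\ge T\lambda/2$, which together with Lemma~\ref{small} yields a uniform lower bound on $d_{\Om_\pip}(z)$. If you retain your direct approach, you will need to supply a separate argument for this clause, because for $z$ near $\partial\Om$ the quantity $d_{\Om_\pip}(z)=d_\Om(z)$ can be small while $\ell_\pip(\beta)\ge d_\pip(x,y)\ge\tfrac{5\lambda}{22C_\pip^2}$ is bounded below.
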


\begin{proof}
Let $\beta$ be a $C_U$-uniform curve (with respect to $d$) with end points $x,y$ with arclength (with respect to $d$) parametrization $\beta:[0,L]\rightarrow\Omega$. We can find $t_1,t_2,\ldots, t_{J-1}\in(0,L)$ such that 
\[
0=t_0<t_1<t_2<\cdots<t_{J-1}<t_J=L
\]
and for $j=1,\ldots,J$,
\[
d_\pip(\beta(t_j),\beta(t_{j-1}))<\tfrac{5}{22\, C_\pip^{n_0+1}C_UC_A} 2^{m_j}\pip(2^{m_j})= T2^{m_j}\pip(2^{m_j})
\]
with 
\[
d_\pip(\beta(t_j),\beta(t_{j-1}))\ge T2^{m_j-1}\pip(2^{m_j}). 
\]
Here $m_j$ is chosen such that either $\beta(t_j)\in\Om_{m_j}$ or $\beta(t_{j-1})\in\Om_{m_j}$. 
With $z_j=\beta(t_j)$, note by the hypotheses of the lemma that when $m_j\le m_0+n_0$,
\begin{align*}
d_\pip(z_j,z_{j-1})<T 2^{m_j}\pip(2^{m_j})&\le T\Lambda \frac{22C_\pip^2}{5\,\cdot\, 2^m\pip(2^m)}d_\pip(x,y)\\
 &\le \frac{22C_\pip^2}{5}\, T\frac{\Lambda}{\lambda}\, d_\pip(x,y).
\end{align*}
The remaining proof is split into two cases.

\noindent{\bf Case~1:} $\beta\subset\bigcup_{j=0}^{m_0+n_0}\Om_j$. It follows that $0\leq m_j\leq m_0+n_0$.
Then
by the $C_U$-uniformity of $\beta$ with respect to the metric $d$, we have that for the midpoint $z\in\beta$,
\[
2^{m_0+n_0}\ge d_\Om(z)\ge \frac{1}{C_U}\frac{\ell_d(\beta)}{2}\ge \frac{1}{2C_U} d(x,y),
\]
that is, $d(x,y)\le 2^{m_0+n_0+1}C_U$. Hence 
\[
J\leq \frac{\ell_d(\beta)}{T\lambda}
\le \frac{C_U d(x,y)}{T\lambda} \le \frac{2^{m_0+n_0+1}C_U^2}{T\lambda}.
\]
Applying Lemma~\ref{small} to each subcurve $\beta_j$ connecting $z_j=\beta(t_j)$ and $z_{j-1}=\beta(t_{j-1})$, we have that
\begin{align*}
\ell_\pip(\beta)= \sum_{j=1}^{J}\ell_\pip(\beta_j) \leq C_1^\pip\sum_{j=1}^{J}d_\pip(z_j,z_{j-1})
&\leq C_1^\pip J \frac{22C_\pip^2}{5}T\frac{\Lambda}{\lambda} d_\pip(x,y)\\
&\leq  C_1^\pip \frac{22C_\pip^2}{5}T\frac{\Lambda}{\lambda}\frac{2^{m_0+n_0+1}C_U^2}{T\lambda}\,
d_\pip(x,y).
\end{align*}
We set
\[
T_0:=C_1^\pip \frac{22C_\pip^2}{5}\frac{\Lambda}{\lambda}\frac{2^{m_0+n_0+1}C_U^2}{\lambda}.
\]
Moreover, any $z$ in the trajectory of $\beta$ is in the trajectory of $\beta_j$ for some $j$ and so, applying 
Lemma~\ref{small} to this curve, 
\begin{align*}
\dOmp(z)\ge \frac{1}{C_1^\pip}\ell_\pip(\beta_j)\ge \frac{1}{C_1^\pip}d_\pip(z_j,z_{j-1})
     &\ge \frac{1}{C_1^\pip}\frac{T}{2}\lambda\frac{1}{C\, 2^m\pip(2^m)}d_\pip(x,y)\\
     &\ge \frac{T}{2C\, C_1^\pip}\frac{\lambda}{\Lambda}\, d_\pip(x,y)\\
     &\ge \frac{T}{2C\, C_1^\pip}\frac{\lambda}{\Lambda}\,\frac{1}{T_0}\, \ell_\pip(\beta).
\end{align*}

\noindent{\bf Case 2:} There is some $z$ in the trajectory of $\beta$ such that $d_\Om(z)>2^{m_0+n_0}$. Let $z_1,z_2$ be two
points in the trajectory of $\beta$ such that $d_\Om(z_1)=d_\Om(z_2)=2^{m_0+n_0}$ and $\beta[x,z_1]$, $\beta[z_2,y]$
lie entirely in $\bigcup_{j=0}^{m_0+n_0}\overline{\Om_j}$. In this case, we replace $\beta[z_1,z_2]$ with a curve $\widehat{\beta}$
with end points $z_1,z_2$ such that $\ell_\pip(\widehat{\beta})\le \tfrac{11}{10}d_\pip(z_1,z_2)$. By Lemma~\ref{lem:dist-to-infty}
and by~\eqref{eq:m0-cond}, we have that
\[
\ell_\pip(\widehat{\beta})\le \frac{11}{10} \left[ d_\pip(z_1,\infty)+d_\pip(z_2,\infty)\right]\le \frac{11}{40}
  \le \frac{11}{40} \frac{22C_\pip^2}{\lambda} d_\pip(x,y).
\]
Considering the subdivisions of $\beta[x,z_1]$ and $\beta[z_2,y]$ as before, we get
\[
\ell_\pip(\beta[x,z_1])\le \sum_{j=1}^{J_1}d_\pip(z_j,z_{j-1})\le T_0\, d_\pip(x,y)
\]
and 
\[
\ell_\pip(\beta[z_2,y])\le \sum_{j=J_2}^J d_\pip(z_j,z_{j-1})\le T_0\, d_\pip(x,y).
\]
Here we used the fact that both $J_1$ and $J-J_2$ satisfy the estimates given in Case~1 for $J$.
Thus, with $\gamma$ the concatenation of the three curves $\beta[x,z_1]$, $\widehat{\beta}$, and $\beta[z_2,y]$,
we obtain
\[
\ell_\pip(\gamma)\le \left(2T_0+\frac{121 C_\pip^2}{20 \lambda}\right)d_\pip(x,y).
\]

Let $z\in\gamma$. If $z\in\beta[x,z_1]$ or if $z\in\beta[z_2,y]$, then as in Case~1 above, we obtain
\[
d_{\Om_\pip}(z)\ge \frac{T}{2C\, C_1^\pip}\frac{\lambda}{\Lambda} d_\pip(x,y)
  \ge \frac{T}{2C\, C_1^\pip}\frac{\lambda}{\Lambda}\,\left(2T_0+\frac{121 C_\pip^2}{20 \lambda}\right)^{-1}\ell_\pip(\gamma).
\]
If $z\in\widehat{\beta}$, then by Lemma~\ref{lem:large-k},
\[
d_{\Om_\pip}(z)\ge \frac{669}{1331}\ell_\pip(\widehat{\beta}).
\]
Hence
\[
d_{\Om_\pip}(z)\ge d_{\Om_\pip}(z_1)-d_\pip(z,z_1)\ge d_{\Om_\pip}(z_1)-\ell_\pip(\widehat{\beta})
\]
and so, by the inequality above, we have
\[
d_{\Om_\pip}(z)\ge \frac{669}{2000}d_{\Om_\pip}(z_1)
\ge \frac{669}{2000}\, \frac{T}{2C\, C_1^\pip}\frac{\lambda}{\Lambda}\,\left(2T_0+\frac{121 C_\pip^2}{20 \lambda}\right)^{-1}\ell_\pip(\gamma).
\]
\end{proof}

Recall the definition of $\lambda$ and $\Lambda$ from~\eqref{eq:lam-Lam} above.

\begin{lemma}\label{lem:large-bound}
Suppose that $x\in\Om_m$ and $y\in\Om_k$ with $0\leq m\leq k \le m_0$. Then,
\[
d_\pip(x,y)\leq \frac{2^{m_0+n_0+1}C_U^2}{\lambda}\,2^{m}\pip(2^m). 
\] 
\end{lemma}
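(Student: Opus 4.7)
The strategy is to construct an admissible rectifiable curve from $x$ to $y$ in $\Om$ and bound its $d_\pip$-length. Since $\Om$ is a uniform domain, fix a $C_U$-uniform curve $\beta$ (with respect to $d$) with endpoints $x$ and $y$. The proof splits according to whether $\beta$ remains in the near-boundary region $R:=\bigcup_{j=0}^{m_0+n_0}\Om_j$, since inside this region the trivial bound $\pip\le 1$ gives direct control, while outside it we exploit that $\infty$ is very close to any such point in the $d_\pip$ metric.

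\emph{Case 1: $\beta\subset R$.} Then $d_\Om(z)\le 2^{m_0+n_0}$ for every $z$ on $\beta$. Applying the second uniformity condition at the midpoint of $\beta$ yields $\ell_d(\beta)\le 2C_U\cdot 2^{m_0+n_0}=2^{m_0+n_0+1}C_U$. Since $\pip\le 1$, we get $d_\pip(x,y)\le\ell_\pip(\beta)\le\ell_d(\beta)\le 2^{m_0+n_0+1}C_U$, and since $0\le m\le m_0\le m_0+n_0$ the definition of $\lambda$ gives $2^m\pip(2^m)\ge\lambda$, so combined with $C_U\ge 1$ the stated bound follows.

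\emph{Case 2: $\beta$ exits $R$.} Let $z_1$ and $z_2$ be the first and last points on $\beta$ at which $d_\Om$ reaches the value $2^{m_0+n_0}$. By continuity, the subcurves $\beta[x,z_1]$ and $\beta[z_2,y]$—which are $C_U$-uniform by the standing assumption—remain in $R$, so the Case 1 argument bounds each of their $\pip$-lengths by $2^{m_0+n_0+1}C_U$. For the middle portion, $z_1,z_2\in\Om_{m_0+n_0}$, and since $m_0>n_0+2$ we have $m_0+n_0\ge n_0+2$, so Lemma~\ref{lem:dist-to-infty} combined with~\eqref{eq:m0-cond} gives
\[
d_\pip(z_i,\infty)\le C_UC_\pip\sum_{n=m_0}^\infty 2^n\pip(2^n)<\tfrac{1}{8}.
\]
Hence $d_\pip(z_1,z_2)<\tfrac{1}{4}$, and we may replace $\beta[z_1,z_2]$ by a quasi-geodesic in $\Om$ of $\pip$-length at most $\tfrac{11}{40}$. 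The resulting concatenated curve from $x$ to $y$ has total $\pip$-length at most $2\cdot 2^{m_0+n_0+1}C_U+\tfrac{11}{40}$, which is absorbed into $\frac{2^{m_0+n_0+1}C_U^2}{\lambda}\cdot 2^m\pip(2^m)$ via $C_U^2\ge C_U$ and $2^m\pip(2^m)/\lambda\ge 1$.

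The main obstacle is the constant bookkeeping in Case 2: the sum of two end pieces plus the replacement middle must fit under a single $\frac{2^{m_0+n_0+1}C_U^2}{\lambda}\cdot 2^m\pip(2^m)$, and it is precisely the upgrade from $C_U$ to $C_U^2$ in the stated bound (together with $\lambda\le 2^m\pip(2^m)$) that provides the slack needed to absorb all three contributions.
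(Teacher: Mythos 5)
Your strategy (run the $C_U$-uniform curve where it stays near the boundary, and reroute the far-from-boundary portion through $\infty$) is the same decomposition the paper uses, just phrased as a direct construction rather than a contradiction, and your Case~1 is correct. The gap is exactly where you flag it: the final absorption in Case~2 does not go through. Your concatenated curve has $\pip$-length at most $2\cdot 2^{m_0+n_0+1}C_U+\tfrac{11}{40}$, while the right-hand side of the lemma equals $\tfrac{2^{m_0+n_0+1}C_U^2}{\lambda}\,2^m\pip(2^m)$, and the ratio $2^m\pip(2^m)/\lambda$ can be equal to $1$ (or arbitrarily close to it), so the only slack available is the factor $C_U^2/C_U=C_U$. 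The required inequality $2\cdot 2^{m_0+n_0+1}C_U+\tfrac{11}{40}\le 2^{m_0+n_0+1}C_U^2$ then forces $C_U>2$, and nothing in the standing assumptions excludes $1\le C_U\le 2$. So as written your Case~2 proves a bound that is larger than the stated one by essentially a factor of $2/C_U$.

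The loss comes from two choices, both of which the paper makes differently. First, you cut $\beta$ at the height $d_\Om=2^{m_0+n_0}$, whereas the paper cuts at $d_\Om=2^{m_0}$; this costs you a factor $2^{n_0}>C_U$ on each end piece (the rerouting near $\infty$ still works at the lower cut height, since Lemma~\ref{lem:dist-to-infty} and~\eqref{eq:m0-cond} give $d_\pip(z_i,\infty)<\tfrac18$ already for points at height $2^{m_0}$). Second, you bound each end piece by the midpoint argument applied to the subcurve, giving $\ell_d(\beta[x,z_1])\le 2C_U\cdot(\text{cut height})$, whereas the paper invokes the uniformity condition of the full curve $\beta$ at the cut point to get $\ell_d(\beta[x,z_1])\le C_U\,d_\Om(z_1)$, saving another factor of $2$. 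With both improvements the two end pieces contribute at most $2^{m_0+1}C_U$ in total, and $2^{m_0+1}C_U+\tfrac{11}{40}\le 2^{m_0+n_0+1}C_U^2$ does hold because $2^{n_0}C_U\ge 2$. Without some such sharpening, your argument establishes the lemma only with a worse constant, which matters here because the constant $2^{m_0+n_0+1}C_U^2/\lambda$ is fed verbatim into Lemma~\ref{medium} in the proof of the main theorem.
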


\begin{proof}
Suppose that $x,y$ are as in the hypothesis of the lemma, and that
\[
d_\pip(x,y)> \frac{2^{m_0+n_0+1}C_U^2}{\lambda}\,2^{m}\pip(2^m). 
\] 
Let $\beta$ be a $C_U$-uniform curve (with respect to $d$) with end points $x,y$. 
By the {above supposition},
there is some point $z$ in the trajectory of $\beta$ such that $d_{\Omega}(z)> 2^{m_0+n_0}$. Let $z_1, z_2$ be 
two points in the trajectory of $\beta$ such that $d_\Om(z_1)=d_\Om(z_2)=2^{m_0}$ and $\beta[x,z_1]$, 
$\beta[z_2,y]$ lie entirely in $\bigcup_{j=0}^{m_0}\overline{\Om_j}$. We replace $\beta[z_1,z_2]$ with a curve $\widehat{\beta}$
with end points $z_1,z_2$ such that $\ell_\pip(\widehat{\beta})\le \tfrac{11}{10}d_\pip(z_1,z_2)$. By 
{the supposition assumed at the beginning of the proof again}, with $C=\frac{2^{m_0+n_0+1}C_U^2}{\lambda}$, we have 
\begin{align*}
\ell_\pip(\beta[x,z_1])
\le\ell_d(\beta[x,z_1])
\le C_U d_\Omega(z_1) 
{=}  C_U 2^{m_0}
\le \frac{ C_U 2^{m_0}}{\lambda C} d_\pip(x,y). 
\end{align*}
Similarly, we get
\[
\ell_\pip(\beta[z_2,y])\le \frac{ C_U 2^{m_0}}{\lambda C} d_\pip(x,y).
\]
Moreover, by Lemma~\ref{lem:dist-to-infty} and~\eqref{eq:m0-cond},
\[
\ell_\pip(\widehat{\beta})\le \frac{11}{10}d_\pip(z_1,z_2)\le\frac{11}{40}
\le\frac{11}{40}\ \frac{1}{C\lambda} d_\pip(x,y).
\]
It follows that
\[
\ell_\pip(\gamma)\le \left(\frac{2^{m_0+1}C_U}{\lambda C}
+\frac{11}{40C\,\lambda}\right)\, d_\pip(x,y)<d_\pip(x,y),
\]
which is not possible.
\end{proof}

\begin{lemma}\label{lem:cross-border}
Suppose that $x\in\Om_m$ and $y\in\Om_k$ with $0\le m<m_0<k$. 
If 
\[
d_\pip(x,y)\ge \frac{5\lambda}{44C_\pip^{n_0+1}C_q^2C_UC_A}\, 
\]
then with $\beta$ a $C_U$-uniform curve (with respect to the metric $d$) with end points $x,y$ and with $z_1$ a point in
the trajectory of $\beta$ such that $d_\Om(z_1)=2^{m_0}$ and $\beta[x,z_1]$ contained in $\bigcup_{j=0}^{m_0}\overline{\Om_j}$,
and $\widehat{\beta}$ a curve with end points $z_1$ and $y$ such that $\ell_\pip(\widehat{\beta})\le \tfrac{11}{10}d_\pip(z_1,y)$,
the concatenation $\gamma$ of $\beta[x,z_1]$ and $\widehat{\beta}$ is a $C_3^\pip$-uniform curve
with respect to the metric $d_\pip$ with end points $x,y$. 

Here $C_3^\pip$ is the larger of the two following numbers:
\begin{align*}
C_2^\pip&+\frac{44C_\pip^{n_0+1}C_q^2C_UC_A}{5\lambda}\left(\frac{C_2^\pip}{2C_\pip}+\frac{11}{40}\right),\\
C_2^\pip&\left[1+\left(\frac{11}{40C_2^\pip}+\frac{1}{2C_\pip}\right)\frac{2000\,C_2^\pip}{669\lambda}\right].
\end{align*}
\end{lemma}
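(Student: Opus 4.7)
The goal is to verify the two conditions defining $C_3^\pip$-uniformity of $\gamma=\beta[x,z_1]*\widehat\beta$ with respect to $d_\pip$: the length bound $\ell_\pip(\gamma)\le C_3^\pip\,d_\pip(x,y)$ and the twisted-cone condition $\min\{\ell_\pip(\gamma[x,z]),\ell_\pip(\gamma[z,y])\}\le C_3^\pip\,d_{\Om_\pip}(z)$ for every $z$ on $\gamma$. The strategy is to apply Lemma~\ref{medium} to the shallow piece $\beta[x,z_1]$ and Lemma~\ref{lem:large-k} to the deep piece $\widehat\beta$, then stitch the two estimates together by absorbing additive constants using the lower bound $d_\pip(x,y)\ge A^{-1}$ with $A=\tfrac{44C_\pip^{n_0+1}C_q^2C_UC_A}{5\lambda}$ provided by the hypothesis.

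First I would record the following ingredients. Since $\beta[x,z_1]$ is a $C_U$-uniform subcurve with respect to $d$ with endpoints in $\Om_m$ and $\Om_{m_0}$ ($m\le m_0$), contained entirely in $\bigcup_{j=0}^{m_0}\overline{\Om_j}\subset\bigcup_{j=0}^{m_0+n_0}\Om_j$, Lemma~\ref{medium} (combined with Lemma~\ref{small} to handle the small-distance regime, and arranging $C_2^\pip\ge C_1^\pip$) gives that $\beta[x,z_1]$ is $C_2^\pip$-uniform in $d_\pip$ with the strong minorization $d_{\Om_\pip}(z)\ge (C_2^\pip)^{-1}\ell_\pip(\beta[x,z_1])$ for all $z\in\beta[x,z_1]$. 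Since $\widehat\beta$ connects $z_1\in\Om_{m_0}$ to $y\in\Om_k$ with $k>m_0$ and satisfies $\ell_\pip(\widehat\beta)\le\tfrac{11}{10}d_\pip(z_1,y)$, Lemma~\ref{lem:large-k} yields that $\widehat\beta$ is $\tfrac{1331}{669}$-uniform in $d_\pip$ with $d_{\Om_\pip}(z)\ge \tfrac{669}{1331}\ell_\pip(\widehat\beta)$ for every $z\in\widehat\beta$. Using Lemma~\ref{lem:dist-to-infty} and the defining condition~\eqref{eq:m0-cond} for $m_0$, I get $d_\pip(z_1,\infty)<\tfrac18$ and $d_\pip(y,\infty)<\tfrac18$, hence $d_\pip(z_1,y)\le\tfrac14$ and $\ell_\pip(\widehat\beta)\le\tfrac{11}{40}$; and Lemma~\ref{lem:dist-pip-bdy} gives $d_{\Om_\pip}(z_1)\ge\tfrac{50}{121}$.

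For the length bound I would combine these with the triangle inequality $d_\pip(x,z_1)\le d_\pip(x,y)+d_\pip(y,z_1)$ to obtain $\ell_\pip(\gamma)\le C_2^\pip d_\pip(x,y)+\tfrac{C_2^\pip}{4}+\tfrac{11}{40}$, and then use $A^{-1}\le d_\pip(x,y)$ to convert the additive constants to multiples of $d_\pip(x,y)$, producing the first candidate for $C_3^\pip$. For the cone condition, the case $z\in\beta[x,z_1]$ follows directly from the strong minorization of Lemma~\ref{medium}, and for $z\in\widehat\beta$ the bound $\ell_\pip(\gamma[z,y])\le\tfrac{1331}{669}d_{\Om_\pip}(z)$ from Lemma~\ref{lem:large-k} handles the $y$-side.

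The main obstacle is the intermediate case where $z\in\widehat\beta$ lies near $z_1$ and the smaller of the two lengths is $\ell_\pip(\gamma[x,z])=\ell_\pip(\beta[x,z_1])+\ell_\pip(\widehat\beta[z_1,z])$, which is dominated by $\ell_\pip(\beta[x,z_1])$ and can be as large as $C_2^\pip\,d_{\Om_\pip}(z_1)$ while $d_{\Om_\pip}(z)$ is only controlled by $d_{\Om_\pip}(z)\ge d_{\Om_\pip}(z_1)-\ell_\pip(\widehat\beta[z_1,z])$. Here I would use the absolute separation $d_{\Om_\pip}(z_1)\ge\tfrac{50}{121}$ together with $\ell_\pip(\widehat\beta)\le\tfrac{11}{40}$ to conclude that $d_{\Om_\pip}(z)$ is uniformly bounded below, and then use $d_{\Om_\pip}(z_1)\le d_{\Om_\pip}(z)+\ell_\pip(\widehat\beta)$ together with $\ell_\pip(\widehat\beta)\le\tfrac{1331}{669}d_{\Om_\pip}(z)$ to re-express the bound entirely in terms of $d_{\Om_\pip}(z)$. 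This yields the second candidate for $C_3^\pip$, of the form $C_2^\pip[1+(\tfrac{11}{40C_2^\pip}+\tfrac{1}{2C_\pip})\tfrac{2000C_2^\pip}{669\lambda}]$; taking the maximum of the two candidates completes the proof.
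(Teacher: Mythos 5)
Your proof follows the same strategy as the paper: split $\gamma$ into the shallow piece $\beta[x,z_1]$ (handled by Lemma~\ref{medium}) and the deep piece $\widehat{\beta}$ (handled by Lemma~\ref{lem:large-k}), establish the length bound by the triangle inequality and the hypothesis $d_\pip(x,y)\ge \tfrac{5\lambda}{44C_\pip^{n_0+1}C_q^2C_UC_A}$ to absorb the additive constants, and handle the cone condition for $z\in\widehat{\beta}$ by combining the two lemmas' strong minorizations with the uniform lower bound on $d_{\Om_\pip}(z)$. Both the decomposition and the choice of lemmas are identical to the paper's argument. The only discrepancies are in the arithmetic: you bound $d_\pip(z_1,y)$ by $\tfrac14$ from $d_\pip(\cdot,\infty)\le\tfrac18$ and the triangle inequality, whereas the paper uses the sharper $d_\pip(z_1,y)\le\tfrac{1}{2C_\pip}$ (coming from Remark~\ref{rem:def-n0} rather than from Lemma~\ref{lem:dist-to-infty} plus triangle), and your derivation of the uniform lower bound on $d_{\Om_\pip}(z)$ in the near-$z_1$ case goes through $d_{\Om_\pip}(z_1)\ge\tfrac{50}{121}$ minus $\ell_\pip(\widehat{\beta})$, whereas the paper instead derives $\tfrac{2000}{669}d_{\Om_\pip}(z)\ge\tfrac{\lambda}{C_2^\pip}$ from $\ell_\pip(\beta[x,z_1])\ge 2^{m_0}\pip(2^{m_0})$. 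These lead to slightly different numerical constants but not to a different argument, so the conclusion stands (with a constant of the same form). One small note of care: when you invoke Lemma~\ref{medium} for $\beta[x,z_1]$, you should also point to Lemma~\ref{lem:large-bound} (with $C=2^{m_0+n_0+1}C_U^2/\lambda$) to verify that $d_\pip(x,z_1)$ satisfies the required upper bound in that lemma's hypothesis; the paper is terse about this too.
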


\begin{proof}
Let $\beta$, $z_1$, $\widehat{\beta}$, and $\gamma$ be as in the statement of the lemma. Then by  
Lemma~\ref{lem:dist-to-infty}, equation~\eqref{eq:m0-cond}, and Lemma~\ref{medium},
\begin{align*}
\ell_\pip(\gamma)=\ell_\pip(\beta[x,z_1])+\ell_\pip(\widehat{\beta})
   &\le C_2^\pip\, d_\pip(x,z_1)+\frac{11}{40}\\
   &\le C_2^\pip \left[d_\pip(x,y)+d_\pip(z_1,y)\right]+\frac{11}{40}\\
   &\le C_2^\pip\, d_\pip(x,y)+\left(\frac{C_2^\pip}{2C_\pip}+\frac{11}{40}\right)\\
    &\le \left[C_2^\pip +\left(\frac{C_2^\pip}{2C_\pip}
+\frac{11}{40}\right)\frac{44C_\pip^{n_0+1}C_q^2C_UC_A}{5\lambda}\right]\, d_\pip(x,y),
\end{align*}
showing that $\gamma$ is quasiconvex with respect to $d_\pip$. 

Now, if $z$ is a point in the trajectory of $\beta[x,z_1]$, then by Lemma~\ref{medium} we have that
\[
d_{\Om_\pip}(z)\ge \frac{1}{C_2^\pip}\ell_\pip(\beta[x,z_1]).
\]
If $z$ is a point in $\widehat{\beta}$, then 
\begin{align*}
d_{\Om_\pip}(z)\ge d_{\Om_\pip}(z_1)-d_\pip(z,z_1)
   &\ge \frac{1}{C_2^\pip}\ell_\pip(\beta[x,z_1])-\frac{1}{2C_\pip}\\
   &\ge \frac{1}{C_2^\pip}\ell_\pip(\gamma)-\left(\frac{\ell_\pip(\widehat{\beta})}{C_2^\pip}+\frac{1}{2C_\pip}\right)\\
   &\ge \frac{1}{C_2^\pip}\ell_\pip(\gamma)-\left(\frac{11}{40 C_2^\pip}+\frac{1}{2C_\pip}\right).
\end{align*}
Also, by Lemma~\ref{lem:large-k},
\[
d_{\Om_\pip}(z)\ge \frac{669}{1331}\ell_\pip(\widehat{\beta}),
\]
and so 
\[
d_{\Om_\pip}(z)\ge \frac{1}{C_2^\pip}\ell_\pip(\beta[x,z_1])-\ell_\pip(\widehat{\beta})
  \ge \frac{2^{m_0}\pip(2^{m_0})}{C_2^\pip}-\frac{1331}{661}d_{\Om_\pip}(z),
\]
from whence we obtain
\[
\frac{2000}{669}\, d_{\Om_\pip}(z)\ge \frac{\lambda}{C_2^\pip}.
\]
Thus, we finally get
\[
\frac{1}{C_2^\pip}\ell_\pip(\gamma)\le \left[1+\left(\frac{11}{40C_2^\pip}+\frac{1}{2C_\pip}\right)\frac{2000\,C_2^\pip}{669\lambda}\right]\, d_{\Om_\pip}(z),
\]
implying that $\gamma$ is a $C_3^\pip$-uniform curve with respect to $d_\pip$. 
\end{proof}

\begin{lemma}\label{lem:to-infinity-and-beyond}
Suppose that $x\in\Om$ and $y=\infty$. Then, there exists a $C_4^\pip$-uniform curve with respect to $d_\pip$ with 
end points $x,y$. Here
\[
C_4^\pip=\max\left\{\frac{1331}{669}, C_3^\pip\right\}.
\]
\end{lemma}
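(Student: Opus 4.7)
The plan is to split into two cases based on the level of $x$, mirroring the strategy of Lemmas~\ref{lem:large-k} and~\ref{lem:cross-border}. In both cases, a curve from $x$ to $\infty$ is understood as a curve $\gamma:[0,L)\to\Om$ with $\gamma(0)=x$ and $\lim_{t\to L^-}\gamma(t)=\infty$ in the metric $d_\pip$; the existence of such curves (arising as locally uniform limits of sequences of $C_U$-uniform curves in $d$) is guaranteed by the Arzel\`a--Ascoli construction in the proof of Lemma~\ref{lem:noncomplete}.

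\textbf{Case 1.} Suppose $x\in\Om_m$ with $m\ge m_0$. Then by Lemma~\ref{lem:dist-pip-bdy} we have $d_{\Om_\pip}(x)\ge 50/121$, and by Lemma~\ref{lem:dist-to-infty} together with~\eqref{eq:m0-cond} we have $d_\pip(x,\infty)\le 1/4$. Pick a curve $\gamma$ from $x$ to $\infty$ with $\ell_\pip(\gamma)\le \tfrac{11}{10}d_\pip(x,\infty)$, which yields quasiconvexity with constant $\tfrac{11}{10}$ and gives $\ell_\pip(\gamma)<\tfrac{11}{40}$. For any $z$ in the trajectory of $\gamma$,
\[
d_{\Om_\pip}(z)\ge d_{\Om_\pip}(x)-d_\pip(x,z)\ge \tfrac{50}{121}-\tfrac{11}{40}=\tfrac{669}{4840}\ge \tfrac{669}{1331}\,\ell_\pip(\gamma),
\]
so $\gamma$ is a $(1331/669)$-uniform curve in $d_\pip$, exactly as in Lemma~\ref{lem:large-k}.

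\textbf{Case 2.} Suppose $x\in\Om_m$ with $0\le m<m_0$. Take the limit uniform curve $\beta_\infty$ from the proof of Lemma~\ref{lem:noncomplete} starting at $x$ and tending to $\infty$, and let $z_1$ be the first point along $\beta_\infty$ with $d_\Om(z_1)=2^{m_0}$, so the subcurve $\beta:=\beta_\infty[x,z_1]$ lies in $\bigcup_{j=0}^{m_0}\overline{\Om_j}$ and is $C_U$-uniform in $d$ (since subcurves of uniform curves are uniform). The subdivision argument of Case~1 of Lemma~\ref{medium} then shows $\beta$ is a $C_2^\pip$-uniform curve in $d_\pip$ with endpoints $x$ and $z_1$. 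Since $z_1\in\Om_{m_0}$, Case~1 of the present lemma provides a $(1331/669)$-uniform curve $\widehat{\beta}$ in $d_\pip$ from $z_1$ to $\infty$. Concatenating $\beta$ and $\widehat{\beta}$ yields a curve $\gamma$ from $x$ to $\infty$, and the uniformity of $\gamma$ in $d_\pip$ follows from the same arithmetic as in the proof of Lemma~\ref{lem:cross-border}, with $y=\infty$ and with Case~1 above playing the role of Lemma~\ref{lem:large-k} when bounding $d_{\Om_\pip}(z)$ for $z\in\widehat{\beta}$. Taking the maximum of the resulting constant with $1331/669$ produces $C_4^\pip$.

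\textbf{Main obstacle.} The principal difficulty is Case~2: tracking how the quasiconvexity and uniformity constants of $\beta$ and $\widehat{\beta}$ combine across the concatenation at $z_1$, and confirming that they do not exceed $C_3^\pip$. The calculations are essentially identical to those in Lemma~\ref{lem:cross-border}, but one must verify that the denominator $d_\pip(x,\infty)$ playing the role of $d_\pip(x,y)$ in the quasiconvexity estimate is bounded below by a definite positive constant depending only on the data, which can be extracted from a small adaptation of the lower bound in Lemma~\ref{lem:dist-to-infty} valid for every $m\ge 0$.
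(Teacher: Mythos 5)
Your proposal is correct and follows essentially the same strategy as the paper: split by whether $x\in\Om_m$ has $m\ge m_0$ (reuse the Lemma~\ref{lem:large-k} estimate on a near-geodesic to $\infty$) or $m<m_0$ (concatenate a $C_U$-uniform subcurve in the bounded levels with a near-geodesic to $\infty$, and repeat the arithmetic of Lemma~\ref{lem:cross-border}). The one cosmetic difference is that in Case~2 you cut at the first point with $d_\Om=2^{m_0}$ and unroll the proof of Lemma~\ref{lem:cross-border} directly, whereas the paper cuts at the first entry into $\Om_k$ for a chosen $k\ge m_0+n_0$ precisely to verify the lower-bound hypothesis $d_\pip(x,\cdot)\ge \tfrac{5\lambda}{44C_\pip^{n_0+1}C_q^2C_UC_A}$ of Lemma~\ref{lem:cross-border}; that is exactly the gap you flag in your ``Main obstacle'' paragraph, and it is closed by the observation that any curve from $x\in\Om_m$ (with $m<m_0$) to a point at level $\ge m_0$, or to $\infty$, must cross each intermediate annulus $\Om_n$, $m<n<m_0$, giving the needed lower bound on $d_\pip$.
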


\begin{proof}
Let $x\in\Om_m$ for some non-negative integer $m$. 
If $m\ge m_0$, then as in the proof of Lemma~\ref{lem:dist-to-infty} we can find a curve $\beta$ beginning from $x$ 
and with $\lim_{t\to\infty}\beta(t)=\infty$, such that $\ell_\pip(\beta)\le \tfrac{11}{10} d_\pip(x,\infty)<\frac{11}{80}$.
Here $\beta:[0,\infty)\to\Om$. By considering $x$, $\beta(t)$, and $\beta\vert_{[0,t]}$ in Lemma~\ref{lem:large-k},
we see that $\beta[0,t]$ is a $\tfrac{1331}{669}$-uniform curve with respect to the metric $d_\pip$ for each $t>0$.
It follows that $\beta$ is a $\tfrac{1331}{669}$-uniform curve with respect to $d_\pip$ as well.

Now we consider the case that $m<m_0$. Let $\beta$ be a $C_U$-uniform curve (with respect to the metric $d$)
as constructed in Lemma~\ref{lem:noncomplete} such that $\beta:[0,\infty)\to\Om$ with $\lim_{t\to\infty}\beta(t)=\infty$.
We fix $k\ge m_0+n_0$ such that for each $z\in\Om_k$ we have 
$d_\pip(z,x)\ge \tfrac{5\lambda}{44C_\pip^{n_0+1}C_q^2C_UC_A}$ as in Lemma~\ref{lem:cross-border}. If no such 
$k$ exists, then we can directly apply Lemma~\ref{lem:cross-border} to $\beta$ to see that $\beta$ is a $C_3^\pip$-uniform
curve. With the choice of such $k$, let $\tau=\inf\{t>0:\, \beta(t)\in\bigcup_{j=k}^\infty\Om_j\}$, and we 
set $\gamma$ to be the concatenation of $\beta\vert_{[0,\tau]}$ with a curve $\widehat{\beta}$ with end points
$\beta(\tau)$ and $\infty$ such that $\ell_\pip(\widehat{\beta})\le \tfrac{11}{10}d_\pip(\beta(\tau),\infty)$.
An application of Lemma~\ref{lem:cross-border} now tells us that $\gamma$ is a $C_3^\pip$-uniform curve with
respect to the metric $d_\pip$.

By combining the above two cases, we obtain a $C_4^\pip$-uniform curve with respect to the metric $d_\pip$ and
connecting $x$ to $\infty$; here
\[
C_4^\pip=\max\left\{\tfrac{1331}{669}, C_3^\pip\right\}.
\]
\end{proof}

Now we are ready to prove the main theorem of this note.

\begin{proof}[Proof of Theorem~\ref{thm:main}]
The second claim of the theorem was established in Section~2, and so we now focus on proving that 
$\Om_\pip$ is a uniform domain. To this end, let $x,y\in\Om_\pip$ with $x\ne y$.
If $x=\infty$ or $y=\infty$, then by Lemma~\ref{lem:to-infinity-and-beyond} we have a
$C_4^\pip$-uniform curve with respect to $d_\pip$ connecting $x$ to $y$. So it only remains to consider when 
$x,y\in\Om_\pip\setminus\{\infty\}=\Om$.

Let $m,k$ be two non-negative integers such that $x\in\Om_m$ and $y\in\Om_k$. Without loss of
generality, we assume that $m\le k$. 

With $n_0$ and $m_0$ positive integers such that $2^{n_0-1}\le C_U< 2^{n_0}$ and $m_0\ge n_0+2$ 
with $\sum_{n=m_0-n_0}^\infty 2^n\pip(2^n)<(8C_UC_\pip)^{-1}$ as in~\eqref{eq:m0-cond}, we consider
three cases.
\begin{enumerate}
\item $m_0\le m\le k$. In this case, by Lemma~\ref{lem:large-k} we have a $\tfrac{1331}{669}$-uniform curve
 with respect to $d_\pip$ connecting $x$ to $y$.
\item $0\le m\le k\le m_0$. In this case, Lemma~\ref{lem:large-bound} we know that
$d_\pip(x,y)\le 2^{m_0+n_0+1}C_U^2\lambda^{-1}\, 2^m\pip(2^m)$.
Hence, by Lemma~\ref{small} and by Lemma~\ref{medium} (with $C=2^{m_0+n_0+1}C_U^2\lambda^{-1}$),
there is a $\max\{C_1^\pip, C_2^\pip\}$-uniform curve, with respect to the metric $d_\pip$, connecting $x$
to $y$. 
\item $0\le m<m_0<k$. Then by Lemma~\ref{small} and Lemma~\ref{lem:cross-border}
there is a $\max\{C_1^\pip, C_3^\pip\}$-uniform curve with respect to the metric $d_\pip$ with end
points $x$ and $y$. 
\end{enumerate}

Since the above cases exhaust all the possibilities of $x,y\in\Om$, it follows that 
$\Om_\pip$ is $A_\pip$-uniform with respect to the metric $d_\pip$, with
\[
A_\pip=\max\{C_1^\pip, C_2^\pip, C_3^\pip, C_4^\pip, \tfrac{1331}{669}\}.
\]
\end{proof}

\noindent Address:\\

\noindent R.G.: Department of Mathematical Sciences, P.O. Box 210025, University of
Cincinnati, Cincinnati, OH 45221--0025, U.S.A. \\
\noindent E-mail: {\tt ryan.gibara@gmail.com}\\

\noindent N.S.: Department of Mathematical Sciences, P.O. Box 210025, University of
Cincinnati, Cincinnati, OH 45221--0025, U.S.A. \\
\noindent E-mail: {\tt shanmun@uc.edu} 

\end{document}